\newcommand{\dd}{\,\mathrm{d}}
\newcommand{\e}{\varepsilon}
\newtheorem{mydef}{Definition}
\newtheorem{mythm}{Theorem}
\newtheorem{myprop}{Proposition}
\newtheorem{mylem}{Lemma}
\newtheorem{myrem}{Remark}
\begin{document}

\title{Decay to equilibrium of the filament end density along the leading edge of the lamellipodium} 
\author{Angelika Manhart$^1$, Christian Schmeiser$^2$} 
\date{} 

\maketitle

\begin{center}
1. Faculty of Mathematics, University of Vienna, \\
Oskar-Morgenstern Platz 1,1090 Vienna, Austria,  \\
angelika.manhart@univie.ac.at
\end{center}

\begin{center}
2. Faculty of Mathematics, University of Vienna, \\
Oskar-Morgenstern Platz 1,1090 Vienna, Austria,  \\
christian.schmeiser@univie.ac.at
\end{center}

\vspace{0.5 cm}
\begin{abstract}
A model for the dynamics of actin filament ends along the leading edge of the lamellipodium is analyzed.
It contains accounts of nucleation by branching, of deactivation by capping, and of lateral flow along the leading edge 
by polymerization. A nonlinearity arises from a Michaelis-Menten type modeling of the branching process.
For branching rates large enough compared to capping rates, the existence and stability of nontrivial steady states
is investigated. The main result is exponential convergence to nontrivial steady states, proven by investigating the
decay of an appropriate Lyapunov functional. 
\end{abstract}

\medskip
\noindent
{\bf Acknowledgements:} This work has been supported by the Austrian Science Fund (FWF) through the doctoral school {\em Dissipation and Dispersion in Nonlinear PDEs} (project W1245) as well as the Vienna Science and Technology Fund (WWTF) (project LS13/029). The authors are grateful to J. Vic Small for many
hours of discussions and for the permission to use the drawing in Fig. \ref{fig_le:lateral-flow}.

\medskip
\noindent
{\bf Key words: } lamellipodium, actin, Lyapunov function

\medskip
\noindent
{\bf AMS Subject classification: } %35F61, %Initial-boundary value problems for nonlinear 1st-order systems
                                                   35L50, %Initial-boundary value problems for 1st-order hyperbolic systems
                                                   35Q92, %PDEs in connection with biology and other natural sciences
                                                   92C37 %Cell biology
\vskip 0.4cm

\section{Introduction}

The lamellipodium is a thin protrusion, developing when biological cells spread on flat surfaces. It is supported by a roughly
two-dimensional meshwork of protein filaments, created by polymerization of actin \cite{Small2002}. In steadily protruding lamellipodia, the meshwork exhibits two dominant directions, approximately symmetric to the leading edge of the lamellipodium
\cite{Vinzenz}, and can thus be approximated by two distinct families of filaments. The meshwork is a very dynamic structure, driven by the 
{\em polymerization} of the filaments abutting the leading edge, but also by the nucleation of new filaments via 
{\em branching} away from old filaments, close to their growing ends. This is responsible for the two-direction structure with the angle
between the two families approximately equal to the branching angle. Finally, {\em capping} of filaments plays a role, whence
filaments become deactivated, stop to polymerize, and subsequentially loose contact to the leading edge.

This work deals with a model for the dynamics of filament ends along the leading edge. New filament ends are produced by
branching, they disappear from the leading edge by capping, and they move along the leading edge by what is called
{\em lateral flow,} a consequence of polymerization and of the inclination of filaments relative to the leading edge \cite{Small1994}
(see Fig. \ref{fig_le:lateral-flow}). Under the idealizing assumption of a constant angle between filaments and leading edge
and of a constant polymerization speed, the speed of lateral flow along the leading edge is constant. In reality, however,
this cannot be expected, since the polymerization speed is subject to various influences such as chemical signaling and
mechanical restrictions due to the varying geometry of the leading edge, where the latter will also lead to varying angles
between filaments and leading edge.

\begin{figure} \centering
\includegraphics[width=0.5\textwidth]{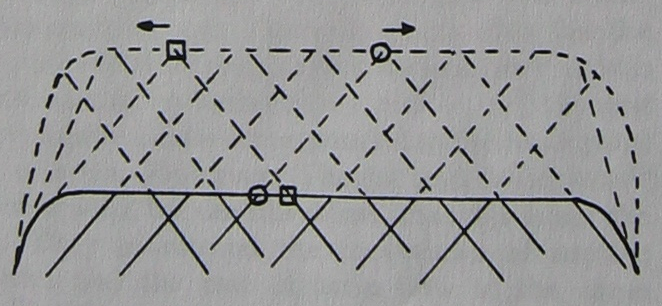}
\caption{Lateral flow. Solid and dashed lines represent the present and, respectively, a future state of filaments of 
the leading edge (drawing courtesy of J. Vic Small).}
\label{fig_le:lateral-flow}
\end{figure}

A complete model therefore needs to describe the positions of filaments and of the leading edge. The authors have been
involved in the formulation of such a modeling framework, the Filament Based Lamellipodium Model (FBLM)
\cite{MOSS,Schmeiser2010}, including accounts for filament bending, cross-linking, and adhesion to the substrate, as well as 
a number of other relevant 
mechanisms. The present work is concerned with a submodel describing branching and capping, and where the 
lateral flow speed along the leading edge will be considered as given. As a further model simplification, 
the lateral flow speed of both filament families will be assumed equal at each point on the leading edge.
Concerning the geometry, two different situations will be considered: For cells surrounded by a lamellipodium, 
the leading edge is described as a one-dimensional interval with a periodicity assumption, where the two ends are 
identified. On the other hand, for cells like the moving fish keratocyte with a crescent-like shape and a lamellipodium 
only along the outer rim, the leading edge is represented by an interval with zero lateral inflow.

The branching process requires the Arp2/3 protein complex connecting the old and the new filament at the branch
point \cite{Svitkina1999}. The assumptions that the availability of Arp2/3 is limiting and that the Arp2/3 dynamics is fast compared to the
branching dynamics results in a Michaelis-Menten type model, similar to the one already formulated in \cite{Grimm2003}.
Capping is described as a simple Poisson process. 

The main question of this work is: Does the mathematical model describe a stable distribution of filament ends?
The answer is a conditional yes with the rather obvious condition that the branching rate has to be big enough compared to the
capping rate. Otherwise the filament end population dies out.

The rest of this paper is structured as follows: In Section \ref{sec_le:derivation} the derivation of the model is described. This has already been explained in the context of the full FBLM in \cite{MOSS}, but it is also included here for the sake of completeness. In Section \ref{sec_le:general_remarks} an existence, uniqueness, and boundedness result is proven. It is also shown that it is
enough to initially have a small amount of filament ends of only one family, to make the densities of both families positive
everywhere within finite time. The short Section \ref{sec_le:Cap-gt_Bra} is concerned with the proof of the simple result that
the end distributions converge to zero, when the branching rate is too small compared to the capping rate.
In Section \ref{sec_le:stationary} existence results for non-trivial stationary states are proven. There are several kinds of 
results. First, it is shown that a transcritical bifurcation away from the zero steady state occurs, when the ratio between
the branching rate and the capping rate exceeds a critical value. This local result is extended in two special situations:
In the case of the periodic leading edge, existence of a nontrivial steady state is proven also far from the bifurcation
point, if the lateral flow speed is almost constant. The same result holds for the mathematically more difficult case of a
leading edge with zero lateral inflow, if the lateral flow speed is constant.
Finally in Section \ref{sec_le:lyapunov} it is shown that for every nonvanishing initial distribution the solution converges
exponentially to the nontrivial steady state, if it exists.

\section{Derivation and Nondimensionalisation of the Model}
\label{sec_le:derivation}

A model very similar to the one considered here has been formulated in \cite{Grimm2003}. We shall follow the derivation
given in \cite{MOSS} in the framework of the FBLM.

The leading edge is assumed as a (potentially closed) rectifiable curve of length $L$, parametrized by arclength
$x\in [0, L]$. We distinguish between two families of filaments, those pointing to the right with number density of ends $u(x,t)$ and those pointing to the left with density $v(x,t)$. By lateral flow the right-pointing filament ends are moved to the right and the left-pointing filament ends to the left, both with the prescribed position dependent speed $c(x)> 0$. It is a simplifying
assumption that the speed is time independent and the same for both families. The density of Arp2/3 is denoted by $a(x,t)$.
It is assumed to be recruited from the cytoplasm to the leading edge with a constant rate $c_\text{rec}$ with the opposite reaction
working towards the equilibrium value $a_0$. Furthermore Arp2/3 is consumed by branching events, where filament ends of
one family create ends of the other with rate constant $\kappa_\text{br}$. Arp2/3 molecules at the leading edge are assumed
immobile for simplicity. Finally, the rate constant for the deactivating capping reaction is denoted by $\kappa_\text{cap}$.
These assumptions lead to the system
\begin{eqnarray*}
\partial_t u+\partial_x (c(x)u) &=& \kappa_\text{br} \frac{a}{a_0}v-\kappa_\text{cap}u \,,\\
\partial_t v-\partial_x (c(x)v) &=& \kappa_\text{br} \frac{a}{a_0}u - \kappa_\text{cap}v \,,\\
\partial_t a &=& c_\text{rec} \left( 1 - \frac{a}{a_0}\right) - \kappa_\text{br} \frac{a}{a_0} (u+v) \,.
\end{eqnarray*}
We introduce the scaling
\begin{align*}
&x\rightarrow x\,L,\quad  t\rightarrow \frac{t}{\kappa_\text{cap}},\quad  c(x)\rightarrow L\,\kappa_\text{cap} c(x), \quad \left(u,v\right)\rightarrow \left(u\frac{c_\text{rec}}{\kappa_\text{br}}, v\frac{c_\text{rec}}{\kappa_\text{br}}\right), \quad
 a \to a_0a,
\end{align*}
and the dimensionless parameters
\begin{align*}
\alpha:=\frac{\kappa_\text{br}}{\kappa_\text{cap}} \,,\qquad \varepsilon = \frac{\kappa_\text{cap} a_0}{c_\text{rec}} \,,
\end{align*}
where $\alpha$, the ratio between the branching and the capping rates, is assumed of moderate size, whereas $\varepsilon$, the
ratio between the characteristic time for Arp2/3 and that of the capping and branching processes will be assumed as small.
Whereas the first assumption is justified and actually necessary, as our analysis will show, the smallness of $\varepsilon$
has, to the knowledge of the authors, not been verified experimentally. The nondimensionalized system has the form
\begin{eqnarray*}
\partial_t u+\partial_x (c(x)u) &=& \alpha av - u \,,\\
\partial_t v-\partial_x (c(x)v) &=& \alpha au - v \,,\\
\varepsilon\partial_t a &=& 1 - a(1+u+v) \,.
\end{eqnarray*}
The last step in the model derivation is to pass to the quasistationary limit $\varepsilon\to 0$ in the equation for $a$, which
is analogous to the derivation of Michaelis-Menten kinetics. Elimination of $a$ from the resulting system gives
\begin{align}
&\partial_t u + \partial_x \left(c(x) u\right)=\frac{\alpha\, v}{1+u+v}-u \,,\nonumber\\
&\partial_t v - \partial_x \left(c(x) v\right)=\frac{\alpha\, u}{1+u+v}-v \,,\label{eqn_le:main}
\end{align}
for $x\in[0,1]$. Two types of boundary conditions are biologically relevant. In the case of a ring-shaped lamellipodium around the whole cell we assume periodic boundary conditions. On the other hand, if we consider only a lamellipodium at the front, it is reasonable to assume that no left-moving filaments enter from the right and vice versa. These considerations allow to complement \eqref{eqn_le:main} with one of the following sets of boundary conditions:
\begin{align}
&\text{(DBC)} \quad u(0,t)=0, \quad v(1,t)=0, \quad \text{for } t > 0 \,,\label{eqn_le:DBC}\\
&\text{(PBC)}\quad u(0,t)=u(1,t), \quad v(0,t)=v(1,t), \quad \text{for } t > 0 \label{eqn_le:PBC}.
\end{align}
Throughout this paper, we will use the abbreviations $(DBC)$ and $(PBC)$ for Dirichlet Boundary Conditions and
Periodic Boundary Conditions, respectively. For $(PBC)$ we implicitly assume that also the lateral flow speed $c(x)$ is periodic.
To complete the definition of the problem, we pose initial conditions
\begin{align}
\label{eqn_le:IC}
u(x,0)=u_0(x) \,, \quad v(x,0)=v_0(x) \,,\qquad\text{for } x\in[0,1] \,,
\end{align}
with given $u_0(x), v_0(x)$, which are assumed to be non-negative and to satisfy the boundary conditions.

\section{Existence, Uniqueness, and Positivity of Solutions}
\label{sec_le:general_remarks}

We start with a reformulation of the problem, which will be useful in most of our proofs. It is based on the assumption
that the lateral flow speed is not only positive, but bounded away from zero: There exist positive constants 
$\underline{c},\overline{c}$, such that
\begin{align}
\label{eqn_le:cbound}
&0 < \underline c\leq c\leq \overline c \qquad \mbox{in } [0,1] \,.
\end{align}
An average inverse speed can then be defined by
$$
   \frac{1}{C} = \int_0^1 \frac{\dd x}{c(x)} \,.
$$
Now we introduce the transformation $(x,u,v) \leftrightarrow (X,U,V)$ by
\begin{equation} \label{eqn_le:transformation}
  X = C\int_0^x \frac{\dd\xi}{c(\xi)} \in [0,1]\,,\qquad cu = CU \,,\qquad cv = CV \,.
\end{equation}
The transformed version of \eqref{eqn_le:main} reads
\begin{align}
&\partial_t U + C\partial_X U = R_{U,V}-U \,,\nonumber\\
&\partial_t V - C \partial_X V = R_{V,U}-V \,,\label{eqn_le:Main}
\end{align}
with 
$$
  R_{U,V}(X,t) = \frac{\alpha\, V(X,t)}{1+\beta(X)(U(X,t)+V(X,t))} \,,
$$
and $\beta = C/c$, bounded from above and below by
\begin{align}
\label{eqn_le:betabound}
&0< \underline \beta := C/\overline c \leq \beta\leq C/\underline c =: \overline\beta\qquad \mbox{in } [0,1] \,.
\end{align}
Note that $(U,V)$ satisfies the same boundary conditions \eqref{eqn_le:DBC}, \eqref{eqn_le:PBC} as $(u,v)$. The transformed initial conditions read
\begin{align}
\label{eqn_le:IC-new}
U(t=0)=U_0:=\frac{c u_0}{C} \,, \quad V(t=0)=V_0:=\frac{c v_0}{C} \,,\qquad\text{in } [0,1] \,.
\end{align}
The reformulation has two effects: First, the solution of the equations by the method of characteristics is simplified, since the
transformed lateral flow speed is constant, and, second, linearization around the zero solution gives a problem with constant
coefficients. Applying the method of characteristics leads to a mild formulation.

\begin{mydef}
\label{def_le:mild}
(PBC) Let $(U,V)\in \mathcal{C}(\mathbb{T}^1\times [0,\infty))^2$ (with the torus $\mathbb{T}^1$ represented by the interval $[0,1]$) satisfy
\begin{eqnarray}
   U(X,t) &=& U_0(X-Ct)e^{-t} + \int_0^t e^{s-t} R_{U,V}(X+C(s-t),s)\dd s \,,\nonumber\\
   V(X,t) &=& V_0(X+Ct)e^{-t} + \int_0^t e^{s-t} R_{V,U}(X-C(s-t),s)\dd s \,,\label{eqn_le:mild-PBC}
\end{eqnarray}
for $(X,t)\in \mathbb{T}^1\times [0,\infty)$. Then 
\begin{equation}\label{eqn_le:uv-from-UV}
  (u(x,t),v(x,t)) = \frac{C}{c(x)} (U(X(x),t),V(X(x),t))
\end{equation}
is called a global mild solution of the problem \eqref{eqn_le:main}, \eqref{eqn_le:PBC}, \eqref{eqn_le:IC}. \\
(DBC) Let $(U,V)\in \mathcal{C}([0,1]\times [0,\infty))^2$ satisfy
\begin{eqnarray}
   U(X,t) &=& U_0(X-Ct)H(X-Ct)e^{-t} + \int_{(t-X/C)_+}^t e^{s-t} R_{U,V}(X+C(s-t),s)\dd s \,,\label{eqn_le:mild-DBC}\\
   V(X,t) &=& V_0(X+Ct)H(1-X-Ct)e^{-t} + \int_{(t-(1-X)/C)_+}^t e^{s-t} R_{V,U}(X-C(s-t),s)\dd s \,,\nonumber
\end{eqnarray}
for $(X,t)\in [0,1]\times [0,\infty)$, where $H$ denotes the Heavyside function. Then $(u,v)$ defined by \eqref{eqn_le:uv-from-UV}
is called a global mild solution of the problem \eqref{eqn_le:main}, \eqref{eqn_le:DBC}, \eqref{eqn_le:IC}. 
\end{mydef}
 
\begin{myprop}
\label{prop_le:IVP}
Let $u_0, v_0, c\in \mathcal{C}([0,1])$ satisfy \eqref{eqn_le:cbound}, $u_0,v_0\ge 0$, and the boundary conditions (DBC) or (PBC).
Then the problem \eqref{eqn_le:main}, \eqref{eqn_le:DBC}, \eqref{eqn_le:IC}, or, respectively, \eqref{eqn_le:main}, \eqref{eqn_le:PBC},
\eqref{eqn_le:IC}, has a unique, global mild solution $(u,v)$, satisfying
\begin{equation} \label{eqn_le:uv-bdd}
  0\le u(x,t) \le \frac{\overline c}{\underline c} \max\left\{ \max_{[0,1]} u_0, \alpha \right\} \,,\quad 
  0\le v(x,t) \le \frac{\overline c}{\underline c} \max\left\{ \max_{[0,1]} v_0, \alpha \right\}\,.
\end{equation}
\end{myprop}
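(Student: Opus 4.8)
The plan is to prove existence and uniqueness by a fixed-point argument on the mild formulation, working in the transformed variables $(U,V)$, and then to transfer the bounds back via \eqref{eqn_le:uv-from-UV}. First I would set up the solution map $\Phi$ on the space $\mathcal{C}([0,1]\times[0,T])^2$ (or on the torus for (PBC)) that sends a pair $(\tilde U,\tilde V)$ to the pair defined by the right-hand sides of \eqref{eqn_le:mild-PBC} (resp.\ \eqref{eqn_le:mild-DBC}), with $R_{\tilde U,\tilde V}$ and $R_{\tilde V,\tilde U}$ evaluated along characteristics. The key observation making the argument clean is that the nonlinearity $R_{U,V} = \alpha V/(1+\beta(U+V))$ is globally Lipschitz on the set $\{U,V\ge 0\}$: since $\beta\ge\underline\beta>0$, one has bounds on $R$ and on $\partial_U R$, $\partial_V R$ that are independent of the size of $U,V$ (the denominator only helps). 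This means $\Phi$ is a contraction on a fixed time interval $[0,T]$ with $T$ depending only on $\alpha,\underline\beta$ and not on the initial data, so after establishing a local solution one can iterate to get a global one.

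The steps, in order: (i) verify that $\Phi$ maps the nonnegative cone into itself — the $e^{-t}U_0$ term is nonnegative and the integral term has a nonnegative integrand whenever $\tilde U,\tilde V\ge 0$; for (DBC) the Heaviside factor and the truncated lower limit $(t-X/C)_+$ encode the boundary condition $U(0,t)=0$ and keep things nonnegative as well; (ii) show $\Phi$ is a contraction on $\mathcal{C}([0,1]\times[0,T])^2$ with the sup norm using the global Lipschitz bound on $R$ and the factor $\int_0^t e^{s-t}\dd s\le 1$, choosing $T$ small enough; (iii) conclude a unique local mild solution in the nonnegative cone, then extend to $[0,\infty)$ by repeating with the same step size $T$ since the Lipschitz constant is uniform; (iv) derive the a priori bound. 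For the bound, I would go back to the $(U,V)$ equations in characteristic form: along a characteristic $\frac{d}{dt}U = R_{U,V} - U$, and since $R_{U,V} = \alpha V/(1+\beta(U+V)) \le \alpha/\beta \cdot \beta V/(1+\beta(U+V)) \le \alpha/(\underline\beta)\cdot\ldots$ — more simply $R_{U,V}\le \alpha V/(1+\beta V)\le \alpha/\beta \le \alpha/\underline\beta$; hmm, one wants $\alpha$ exactly, so note $\beta R_{U,V} = \alpha\beta V/(1+\beta(U+V)) \le \alpha$, hence $R_{U,V}\le\alpha/\beta\le\alpha\overline{c}/C$. Then $\frac{d}{dt}(Ue^t) = e^t R_{U,V}$ gives $U(X,t)\le e^{-t}U_0 + (1-e^{-t})\sup R_{U,V}$, so $U$ stays below $\max\{\sup U_0, \alpha\overline c/C\}$; translating via $u = (C/c)U$ and $U_0 = cu_0/C$ produces the stated bound $\tfrac{\overline c}{\underline c}\max\{\max u_0,\alpha\}$, and nonnegativity of $u$ follows from that of $U$.

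The main obstacle is the (DBC) case: one must check carefully that the mild formula \eqref{eqn_le:mild-DBC} with the Heaviside functions and the clipped integration limits genuinely defines a continuous function up to and including the boundary, and that the contraction estimate is unaffected by the $X$-dependent lower limit $(t-X/C)_+$ (it is, since shrinking the integration interval only decreases the relevant integral). One also has to confirm the two pieces — the characteristic coming from the initial data when $X>Ct$, and the one coming from the boundary when $X<Ct$ — match continuously across $X=Ct$; this uses the compatibility condition $u_0(0)=0$ assumed in the hypotheses. Everything else (the PBC case, the a priori estimate, the uniform-in-time iteration) is routine once the uniform Lipschitz property of $R$ is in hand.
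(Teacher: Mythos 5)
Your proposal is correct and follows essentially the same route as the paper: a contraction argument on the mild formulation in the transformed variables $(U,V)$, using the nonnegativity, the uniform Lipschitz continuity of $R_{U,V}$ on the nonnegative cone, and the bound $R_{U,V}\le\alpha/\underline\beta$ to obtain global existence and the a priori estimate \eqref{eqn_le:uv-bdd}.
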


\begin{myrem}
It is straightforward to show by differentiation of the equations that for smooth initial data $u_0,v_0\in \mathcal{C}^\infty(\mathbb{T}^1)$ for (PBC), and $u_0\in \mathcal{C}^\infty_0((0,1])$, $v_0\in \mathcal{C}^\infty_0([0,1))$ for  (DBC), and for smooth lateral flow speed $c\in \mathcal{C}^\infty([0,1])$, the solution satisfies $u,v\in \mathcal{C}^\infty([0,1]\times[0,\infty))$.
Some results of the following sections are based on computations with the strong forms \eqref{eqn_le:main} or \eqref{eqn_le:Main} 
of the differential equations. These can be justified by uniform smooth approximations of $u_0,v_0,c$, and subsequent removal
of the smoothing.
\end{myrem}

\begin{proof}
The (obvious) non-negativity and Lipschitz continuity of $R_{U,V}$ in terms of $(U,V)\in [0,\infty)^2$, as well as the bound
$R_{U,V}\le \alpha/\underline\beta$ will be sufficient for carrying out the proof.

With the Lipschitz continuity, it is straightforward to show that the right hand sides of \eqref{eqn_le:mild-PBC} and \eqref{eqn_le:mild-DBC}
preserve non-negativity and are contractions on $\mathcal{C}(\mathbb{T}\times[0,T])$ and, respectively, 
$\mathcal{C}([0,1]\times[0,T])$, for $T$ small enough, which proves local existence.
The estimate 
$$
  0\le u(x,t) \le \frac{C}{\underline c}\left( e^{-t} \sup_{[0,1]} U_0 + \frac{\alpha}{\underline\beta} (1-e^{-t})\right)
  \le \frac{C}{\underline c}\max\left\{\sup_{[0,1]} U_0, \frac{\alpha}{\underline\beta} \right\}
  \le \frac{\overline c}{\underline c} \max\left\{ \max_{[0,1]} u_0, \alpha \right\} 
$$
and the analogous version for $v$ allow to continue the local solution indefinitely and also prove \eqref{eqn_le:uv-bdd}.
\end{proof}

One expects that if the initial conditions $(u_0(x), v_0(x))$ are positive (except at the boundaries for $(DBC)$),  the same holds for the solution $(u(x,t), v(x,t))$ for all $t\geq 0$. In fact a much stronger result is true: It is enough to have positivity of the initial data on some interval for only one family; after finite time both families will be positive everywhere (except at the boundaries for $(DBC)$). The reason for this is that, although the initial mass is reduced by capping while it is transported across the domain, it remains positive. This mass, however, will trigger the creation of new filaments of the other family through the branching term. This new mass will be transported in the opposite direction and will itself cause the creation of mass of the first family. As long as the lateral flow speed $c(x)$ is bounded from below, this process happens in finite time.

\begin{myprop}
\label{prop_le:positivity}
Let the assumptions of Proposition \ref{prop_le:IVP}  hold and let the initial data satisfy
\begin{equation} \label{eqn_le:IC-ass}
  \int_0^1(u_0+v_0)\dd x > 0 \,.
\end{equation}
Then, for every $T>2\int_0^1 \dd x/c(x)$, 
\begin{align*}
&\mbox{for (PBC):}\quad u,v>0\quad \text{in }  [0,1]\times [T,\infty) \,,\\
&\mbox{for (DBC):}\quad u>0\quad \text{in } (0,1]\times [T,\infty) \,,\quad
 v>0\quad \text{in }  [0,1)\times [T,\infty) \,.
\end{align*}
\end{myprop}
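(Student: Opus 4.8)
The plan is to work with the mild formulation and propagate positivity in successive stages along characteristics. First I would show that the hypothesis $\int_0^1(u_0+v_0)\,\mathrm dx>0$ together with the transformation $cu=CU$, $cv=CV$ forces $\int_0^1(U_0+V_0)\,\mathrm dX>0$, so without loss of generality one of the two transformed initial densities, say $U_0$, is positive on some nonempty open interval $I\subset[0,1]$ (the case that only $V_0$ is nontrivial is symmetric, and if both are nontrivial the argument only gets easier). Since the transformed lateral flow speed is the constant $C$, the characteristics are straight lines $X\mp Ct$, and the mild formula \eqref{eqn_le:mild-PBC} (resp. \eqref{eqn_le:mild-DBC}) shows immediately that $U(X,t)\ge U_0(X-Ct)e^{-t}$ as long as the characteristic has not yet left the domain. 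Hence $U(\cdot,t)>0$ on the translated interval $I+Ct$ for all relevant $t$. Because $1/C=\int_0^1\mathrm dx/c(x)$, the time to traverse the whole $X$-interval once is exactly $1/C=\int_0^1\mathrm dx/c(x)$, so within time $1/C$ this moving positive ``bump'' in $U$ has swept across every point of $[0,1]$ at least once (for (PBC) this is immediate by periodicity; for (DBC) one must check the bump does not simply exit through $X=1$ before covering everything — but a bump that starts on an interval $I$ of positive length is positive on $I+Ct$, and the union over $t\in[0,1/C]$ of these translates covers $[0,1]$, with the caveat that near $X=1$ one instead uses the branching-generated $V$, see below).

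Second, I would use the branching term to transfer positivity to the other family. At any $(X_*,t_*)$ with $U(X_*,t_*)>0$, continuity gives a space-time neighborhood on which $U$ stays bounded below by some $\delta>0$; the source term $R_{V,U}=\alpha U/(1+\beta(U+V))$ is then bounded below by a positive constant there (using the upper bounds on $U,V$ from Proposition \ref{prop_le:IVP} and the bound $\beta\le\overline\beta$). Feeding this into the $V$-equation of \eqref{eqn_le:mild-PBC}/\eqref{eqn_le:mild-DBC}, the integral $\int e^{s-t}R_{V,U}(X-C(s-t),s)\,\mathrm ds$ picks up a strictly positive contribution whenever the backward characteristic through $(X,t)$ for $V$ passes through that neighborhood. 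Since $U$ is already positive on a full translating interval for $t$ in a time window of length $1/C$, and $V$'s characteristics move in the opposite direction, after an additional time $1/C$ the backward $V$-characteristic through an arbitrary point must have crossed the region where $U>0$; hence $V>0$ everywhere (on $(0,1)$ at least, the endpoints handled by the boundary condition structure). Then a third, identical step transfers positivity back from $V$ to $U$ via $R_{U,V}$, taking one more time interval of length $1/C$ — but in fact two of these three transport sweeps suffice, because once $U>0$ on a traveling interval one simultaneously generates $V>0$ behind it, and the total time needed is $2/C=2\int_0^1\mathrm dx/c(x)$, matching the stated threshold $T>2\int_0^1\mathrm dx/c(x)$.

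For the boundary behavior under (DBC) I would argue separately near the inflow corners: at $X=0$ one has $U(0,t)=0$ by the boundary condition, but for any $X>0$ and $t>X/C$ the formula \eqref{eqn_le:mild-DBC} replaces the initial-data term by the full integral $\int_{(t-X/C)_+}^t e^{s-t}R_{U,V}\,\mathrm ds$, which is strictly positive as soon as $V>0$ along the incoming characteristic — so $U>0$ on $(0,1]$ and symmetrically $V>0$ on $[0,1)$. The corners $X=1$ for $U$ and $X=0$ for $V$ are interior to the respective domains of positivity and need no special treatment. The main obstacle I anticipate is the careful bookkeeping of which time intervals and which characteristic segments guarantee the overlap of the ``$U>0$ region'' with the backward $V$-characteristics (and vice versa), so that the positivity is genuinely everywhere rather than on a proper subset, and doing this uniformly enough that the sharp constant $2\int_0^1\mathrm dx/c(x)$ emerges; the (DBC) case is the delicate one since one loses the free periodic wrap-around and must instead rely on the boundary integral term to seed positivity near the inflow edges.
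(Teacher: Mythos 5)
Your plan is essentially the paper's proof: pass to the transformed variables where characteristics are straight lines of slope $\pm C$, read off from the mild formulation that positivity persists along a characteristic once acquired and is transferred to the other family through the branching source, and alternate between the two families until the whole domain is covered, with the threshold $2/C=2\int_0^1 \dd x/c(x)$ arising from two traversals. The paper does the bookkeeping you yourself flag as the main obstacle via an explicit chain of space--time regions $\mathcal{S}_0,\mathcal{S}_1,\mathcal{S}_2$ (the right-moving strip where $U>0$, the fan of left-moving $V$-characteristics it seeds, and the $U$-characteristics re-seeded by that fan), which also corrects the one loose spot in your sketch: for (DBC) the rightward-translating bump only sweeps $(X_1,1]$, the points to its left and the late-entering $V$-characteristics near $X=1$ are reached only through the branching-generated family, so the claim that every backward $V$-characteristic meets ``the region where $U>0$'' within time $2/C$ is valid only once the third, concurrent sweep is built into that region --- exactly the non-circular ordering the $\mathcal{S}_j$ construction provides.
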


\begin{figure}[h,t]
  \centering

\begin{tikzpicture}

% S0-area
\draw [fill=gray!20!white] (6,0) -- (8,0) -- (10,0.5) -- (10,1)--(6,0);
\draw (10.5, 0.57) node {$\mathcal{S}_0$};
\draw[->] (10.2, 0.62) to[in=20,out=150] (9.3,0.57);

% S1-area
\draw [ fill=lightgray] (0,1.5) -- (6,0) -- (10,1) -- (0,3.5)--(0,1.5);
\draw (0.35,3) node {$\mathcal{S}_1$};

% S2-area
\draw [fill=gray] (0,3.5) -- (10,1) -- (10,6) -- (0,3.5);
\draw (9.7,5.5) node {$\mathcal{S}_2$};

% horizontal axis
\draw[->] (0,0) -- (11,0) node[anchor=north] {$X$};
% vertical axis
\draw[->] (0,0) -- (0,7) node[anchor=east] {$t$};
\draw (10,0) -- (10,7);
% labels
\draw	(0,0) node[anchor=north] {0}
		(10,0) node[anchor=north] {1};

% tickmark
\draw (10, 0) -- (10, -0.1);

%labels initial mass
\draw (6,-0.3) node {$X_1$};
\draw (8,-0.3) node {$X_2$};

%T_*
\draw[dotted] (0,3.5) -- (10, 3.5);
\draw (-0.3,3.5) node {$T_*$};

% (x,t)
%\draw[dotted] (4,0) -- (4,2.5);
%\draw[dotted] (0,2.5) -- (4,2.5);

%\draw[fill] (4,2.5) circle [radius=0.05];

% labels (x,t)
%\draw (-0.3,2.4) node {$t$};
%\draw (4,-0.3) node {$x$};

% v-Characteristics
%\draw[thick,dashed] (4,2.5) -- (9,0);
%\draw[thick,dashed] (2,0) -- (0,1);
%\draw[thick,dashed] (10,4) -- (0,9);

% crossing points
%\draw[fill] (5.5,1.75) circle [radius=0.05];
%\draw[dotted] (0,1.75) -- (5.5, 1.75);

%\draw[fill] (6,1.5) circle [radius=0.05];
%\draw[dotted] (0,1.5) -- (6, 1.5);

% labels crossing points
%\draw (-0.3,1.5) node {$s_1$};
%\draw (-0.3,1.75) node {$s_2$};

% labeling of the characteristics
%\draw (8.3,1.5) node {$s\mapsto (x_+(x_2,s), s)$};
%\draw[->] (8.5,1.8) to[in=0,out=80] (8.3,2.5);

%\draw (6.8,3.6) node {$s\mapsto (x_+(x_1,s), s)$};
%\draw[->] (6.8,3.3) to[in=170,out=270] (7.3,2.8);

%\draw (2.8,3.3) node {$s\mapsto (x_-(x,s-t), s)$};
%\draw[->]  (3.5,3.1) to[in=45,out=290] (5.1,2.2);

\end{tikzpicture}

\caption{Illustration of the proof of Proposition \ref{prop_le:positivity}.}
\label{fig_le:pos}
\end{figure}
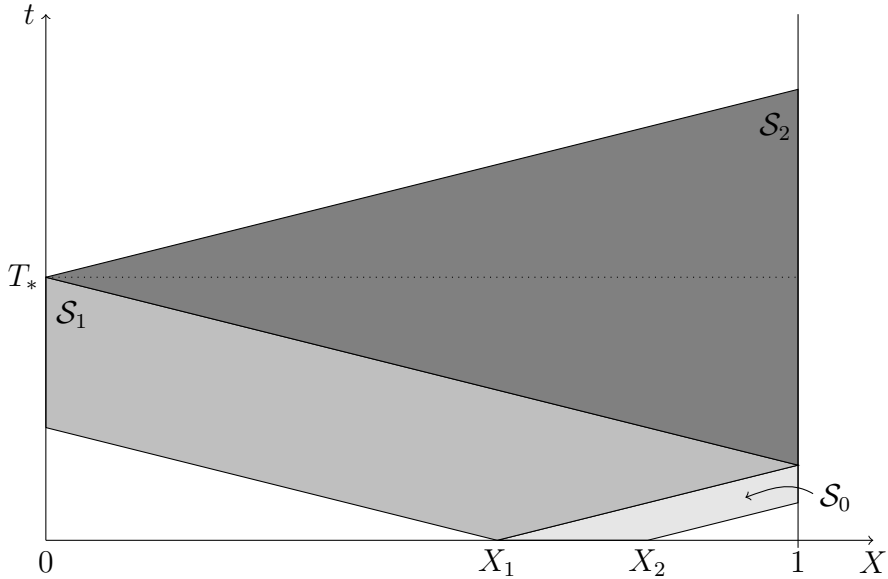

\begin{proof}
The equivalent result for $(U,V)$ will be proved. 
Assumption \eqref{eqn_le:IC-ass} and the continuity of the initial data imply the 
existence of an interval $[X_1,X_2]\subset[0,1]$ of positive length and of $m>0$, such that one of the initial densities, 
w.l.o.g. $U_0$, satisfies $U_0 \ge m$ in $[X_1,X_2]$.

The mild formulations given in Def. \ref{def_le:mild} show that, whenever a $U$-characteristic (with velocity $C$) passes through a region where $V$ is positive, 
$U$ is positive along this characteristic in and after (timewise) this region, and vice versa ($U\leftrightarrow V$, 
$C\leftrightarrow -C$). 
Also $U$ is positive everywhere along a $U$-characteristic after a point on the characteristic, where it is positive (again the same 
for $V$). 

These observations reduce the proof to a geometric problem (see Fig. \ref{fig_le:pos}), where we alternate between using the
equations for $U$ and for $V$.
The first step is the observation that by the above property of the initial data, $U$ is positive in the strip $\mathcal{S}_0$, defined by
$X_1 < X-Ct < X_2$ (light gray shading in Fig. \ref{fig_le:pos}). This implies, as the second step, that $V$ is positive there and along all 
$V$-characteristics starting in this strip, i.e. also in the region $\mathcal{S}_1$ bounded by $X=0$, $X-Ct=X_1$, $X+Ct=X_1$,
$X+Ct=2-X_1$ (gray shading in Fig. \ref{fig_le:pos}). The third step is to draw again $U$ characteristics, now starting in
$\mathcal{S}_1$, which adds a triangle $\mathcal{S}_2$ (dark gray shading in Fig. \ref{fig_le:pos}) above $\mathcal{S}_1$, where $U>0$. Straightforward continuation
shows that for $t\ge T_*=(1-X_1)/C+1/C$, $U$ and $V$ are positive for $0<X<1$. Positivity on the boundary, except 
for $U(0,t)=V(1,t)=0$ in the (DBC) case, is achieved at any time after $T_*$, implying that the result of the proposition holds
with any $T>2/C\ge T_*$.
\end{proof}

\section{When Capping Exceeds Branching}\label{sec_le:Cap-gt_Bra}

The dimensionless parameter $\alpha$ is the ratio between the branching and the capping rate. If it is too small, it can be expected that $u$ and $v$ tend to zero as $t\rightarrow \infty$. 
Note that, on the other hand, Proposition \ref{prop_le:positivity} holds for all $\alpha>0$.  
This means that even if we start with initial conditions, which are zero everywhere and are only positive for one family in a small interval, the solutions first become positive everywhere before they decay to zero. 

\begin{myprop}
\label{prop_le:convergence_small_a}
Let the assumptions of Proposition \ref{prop_le:IVP} and $0\le\alpha<1$ hold. The the solution $(u, v)$ of \eqref{eqn_le:main},
\eqref{eqn_le:DBC}, \eqref{eqn_le:IC} or \eqref{eqn_le:main}, \eqref{eqn_le:PBC}, \eqref{eqn_le:IC} satisfy 
\begin{align*}
  \|u(\cdot,t)\|_{L^2((0,1))} + \|v(\cdot,t)\|_{L^2((0,1))} \le e^{(\alpha-1)t} \left( \|u_0\|_{L^2((0,1))} + \|v_0\|_{L^2((0,1))}\right) \,.
\end{align*}
\end{myprop}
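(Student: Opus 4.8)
The plan is to work with the strong form \eqref{eqn_le:main} (justified by the smoothing argument in the Remark) and derive a differential inequality for the quantity $E(t) := \frac{1}{2}\int_0^1 (u^2+v^2)\dd x$. First I would multiply the $u$-equation by $u$ and the $v$-equation by $v$, integrate over $[0,1]$, and add. The transport terms $\int_0^1 u\,\partial_x(cu)\dd x$ and $-\int_0^1 v\,\partial_x(cv)\dd x$ should be handled by integration by parts: writing $u\,\partial_x(cu) = \partial_x(c u^2) - c u\,\partial_x u = \partial_x(cu^2) - \frac{c}{2}\partial_x(u^2)$, one gets $\int_0^1 u\,\partial_x(cu)\dd x = \frac12\big[c u^2\big]_0^1 + \frac12\int_0^1 (\partial_x c)\, u^2\dd x$, and similarly for $v$. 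Under both (DBC) and (PBC) the boundary terms from the two families combine to something non-negative: for (DBC), $cu^2$ vanishes at $x=0$ and $cv^2$ vanishes at $x=1$, leaving $\frac12 c(1)u(1,t)^2 + \frac12 c(0)v(0,t)^2 \ge 0$; for (PBC), the boundary contributions telescope to zero. The $(\partial_x c)$ terms, however, do not obviously have a sign, which is a nuisance — so I would instead keep the transport terms in a cleaner form.

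A cleaner route: use the transformed system \eqref{eqn_le:Main}, where the transport speed is the constant $C$. Define $\tilde E(t) := \frac12\int_0^1 (U^2+V^2)\dd X$. Multiplying the $U$-equation by $U$ and the $V$-equation by $V$, adding, and integrating, the transport terms become $\frac{C}{2}\big[U^2\big]_0^1 - \frac{C}{2}\big[V^2\big]_0^1 = \frac{C}{2}\big(U(1,t)^2 + V(0,t)^2 - U(0,t)^2 - V(1,t)^2\big)$, which under (DBC) equals $\frac{C}{2}(U(1,t)^2 + V(0,t)^2) \ge 0$ and under (PBC) equals $0$. Hence $\frac{d}{dt}\tilde E \le -\tilde E \cdot 2 + \int_0^1 (U R_{U,V} + V R_{V,U})\dd X$. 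For the reaction terms I would use $R_{U,V} = \frac{\alpha V}{1+\beta(U+V)} \le \alpha V$ (since $U,V,\beta \ge 0$) to get $U R_{U,V} + V R_{V,U} \le 2\alpha U V \le \alpha(U^2+V^2)$. This yields $\frac{d}{dt}\tilde E \le 2(\alpha-1)\tilde E$, and Grönwall gives $\tilde E(t) \le e^{2(\alpha-1)t}\tilde E(0)$, i.e. $\|U(\cdot,t)\|_{L^2} + \|V(\cdot,t)\|_{L^2} \le \sqrt{2}\,\|(U_0,V_0)\|_{L^2}\, e^{(\alpha-1)t}$ after using $\sqrt{a^2+b^2} \le a+b$ and $a+b \le \sqrt{2}\sqrt{a^2+b^2}$; more carefully, one should track that the norm of the pair decays, which follows since $\|U\|_{L^2}+\|V\|_{L^2} \le \sqrt 2\,(2\tilde E)^{1/2}$ while $(2\tilde E(0))^{1/2} \le \|U_0\|_{L^2}+\|V_0\|_{L^2}$, and absorb the $\sqrt2$ only if the stated inequality truly holds with constant $1$.

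The one subtlety — and what I expect to be the main technical point — is recovering the estimate in the original variables with the clean constant $1$ (no $\overline c/\underline c$ factor, unlike in \eqref{eqn_le:uv-bdd}). Since $\|u(\cdot,t)\|_{L^2}^2 = \int_0^1 u^2\dd x = \int_0^1 \frac{C^2 U^2}{c^2}\,\frac{c}{C}\dd X$ using $\dd x = \frac{c}{C}\dd X$ from \eqref{eqn_le:transformation}, we have $\|u(\cdot,t)\|_{L^2}^2 = C\int_0^1 \frac{U^2}{c}\dd X$, with the same weight $c$ at every time $t$. Thus the \emph{weighted} $L^2$ norm $\big(C\int_0^1 \frac{U^2+V^2}{c}\dd X\big)^{1/2}$ is exactly $\|(u,v)\|_{L^2}$, and it suffices to run the energy estimate above with this weight instead. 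Repeating the computation with the weight $1/c$ (equivalently $\beta/C$): the transport term $\int_0^1 \frac{C}{c} U\,\partial_X U\,\dd X = \frac{C}{2}\big[\frac{U^2}{c}\big]_0^1$ still has the correct sign under both boundary conditions, the reaction bound still gives a factor $2\alpha$, and we obtain $\frac{d}{dt}\big(\frac12\int_0^1 \frac{U^2+V^2}{c}\dd X\big) \le 2(\alpha-1)\cdot\frac12\int_0^1 \frac{U^2+V^2}{c}\dd X$; Grönwall then yields the stated inequality directly, with constant $1$, after one application of $\sqrt{a^2+b^2}\le a+b$ to the initial data and the reverse to the solution — or, most cleanly, by noting both sides are equivalent to the decay of the single weighted-energy functional. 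I would present it in this weighted form from the outset to avoid any constant loss.
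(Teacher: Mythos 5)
Your middle computation --- passing to the constant--speed variables $(U,V)$ and estimating $\tilde E(t)=\tfrac12\int_0^1(U^2+V^2)\,\dd X$ --- is correct and is essentially the paper's energy argument transplanted to the transformed system (the paper multiplies \eqref{eqn_le:main} by $u$ and $v$ and integrates directly in $x$). It yields $\tilde E(t)\le e^{2(\alpha-1)t}\tilde E(0)$, i.e.\ exponential decay at the stated rate of the weighted norm $\int_0^1 c\,(u^2+v^2)\,\dd x=C\int_0^1(U^2+V^2)\,\dd X$. The genuine gap is in your final ``weighted'' step, introduced precisely to recover the constant $1$: the identity $\int_0^1\frac{C}{c}\,U\,\partial_X U\,\dd X=\frac{C}{2}\bigl[U^2/c\bigr]_0^1$ is false for non-constant $c$, since $\frac{1}{c}\,\partial_X(U^2)=\partial_X\bigl(U^2/c\bigr)-U^2\,\partial_X(1/c)$; the unsigned $\partial_X(1/c)$ contribution is exactly the $\partial_x c$ nuisance you correctly identified and distrusted in the direct approach, and it reappears here unchanged. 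As written, your argument therefore proves the proposition only with an extra factor of order $\sqrt{\overline c/\underline c}$ on the right-hand side.

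This loss is not an artifact of your method: for non-constant $c$ the inequality with constant $1$ actually fails. Take $\alpha=0$ and $v_0=0$; then $w=cu$ is transported along $\dot x=c(x)$ with amplitude $e^{-t}$, and a change of variables gives $\|u(\cdot,t)\|_{L^2}^2=e^{-2t}\int_0^1 u_0(x_0)^2\,\frac{c(x_0)}{c(x(t;x_0))}\,\dd x_0$, which exceeds $e^{-2t}\|u_0\|_{L^2}^2$ once $u_0$ is concentrated where $c$ is large and $t$ is chosen so that the characteristic has reached a region where $c$ is small. The paper's own proof silently drops the same unsigned term $\tfrac12\int_0^1 \partial_x c\,(v^2-u^2)\,\dd x$ (its displayed identity is exact only for constant $c$, and its boundary term is missing squares), so the statement should be read either for constant $c$, or for the $c$-weighted $L^2$ norm, or with the constant $\sqrt{\overline c/\underline c}$. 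Your unweighted $(U,V)$ estimate is the clean way to obtain the latter two versions; you should stop there rather than trying to force the constant to be $1$.
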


\begin{proof}
We observe that
\begin{align*}
\frac{\dd}{\dd t}\int_0^1 \frac{1}{2}\left(u^2+v^2\right)\dd x=&\int_0^1 \left( \frac{2\alpha uv}{1+u+v} - u^2 - v^2\right)\dd x\\
&+\begin{cases}
  0  \quad & \text{(PBC)} \,, \\
  -c(1)u(1,t)-c(0)v(0,t)    \quad &\text{(DBC)}\,.
  \end{cases}
\end{align*}
The non-positivity of the boundary term and the estimate
\begin{eqnarray*}
\frac{2\alpha uv}{1+u+v} - u^2 - v^2 &=& \frac{(\alpha-1)(u^2+v^2) - \alpha(u-v)^2 - (u+v)(u^2+v^2)}{1+u+v} \\
 &\le& (\alpha-1) \left(u^2+v^2\right)
\end{eqnarray*}
finish the proof.
\end{proof}

The results of the following sections show that the bound on $\alpha$ is sharp for (PBC), but not for (DBC), where decay to zero
can be expected also for $1\le \alpha< \alpha_0$ with the bifurcation value $\alpha_0$.

\section{Existence of Nontrivial Steady States}
\label{sec_le:stationary}

\subsection*{Bifurcation from the zero solution}

A bifurcation value $\alpha_0\ge 1$ for the ratio $\alpha$ between the branching rate and the capping rate will be computed,
where a nontrivial steady state branches off the zero solution. The above inequality is derived easily by repeating the 
computation in the proof of Proposition \ref{prop_le:convergence_small_a} for the steady state problem, proving that for $\alpha<1$ it has only the zero solution.

We shall work again with the variables $(X,U,V)$, where the linearization around the zero solution has constant coefficients. We
rewrite \eqref{eqn_le:Main} as
\begin{equation}\label{eqn_le:linearization}
  \partial_t \begin{pmatrix} U\\V\end{pmatrix} = \mathcal{L}_\alpha (U,V) - \frac{\alpha\beta(U+V)}{1+\beta(U+V)}
  \begin{pmatrix} V\\U\end{pmatrix} \,,
\end{equation}
with the linearized operator
$$
 \mathcal{L}_\alpha (U,V)  = \begin{pmatrix} \mathcal{L}_{\alpha,1} (U,V)\\\mathcal{L}_{\alpha,2} (U,V)\end{pmatrix} 
  = \begin{pmatrix} \alpha V - U - C\partial_X U\\\alpha U - V + C\partial_X V\end{pmatrix} \,.
$$
The bifurcation point will be the smallest value $\alpha_0\ge 1$, where $\mathcal{L}_{\alpha_0}$, subject to the boundary conditions
\eqref{eqn_le:DBC} or \eqref{eqn_le:PBC}, has a nontrivial null space.

By inspection it is obvious that for (PBC) $\alpha_0=1$ holds with $(\tilde U,\tilde V) := (1,1) \in \mathcal{N}(\mathcal{L}_1)$.
Straightforward computations show that the null space is one-dimensional. 

For (DBC) another straightforward, although a little longer computation gives that bifurcation values are of the form 
$\alpha = 1/|\cos b|$, where $b$ solves 
\begin{equation}\label{eqn_le:b}
  Cb + \tan b = 0 \,.
\end{equation}
We denote by $b_0\in (\pi/2,\pi)$ the smallest positive solution, set $\alpha_0 = 1/|\cos b_0|$, and note that
\begin{equation}\label{eqn_le:alpha0}
  \begin{pmatrix} \tilde U\\ \tilde V\end{pmatrix} := \begin{pmatrix}\sin(b_0 X)\\ \sin(b_0(1-X))\end{pmatrix} 
  \in \mathcal{N}(\mathcal{L}_{\alpha_0}) \,.
\end{equation}
Note that $\tilde U>0$ in $(0,1]$ and $\tilde V > 0$ in $[0,1)$. Again the null space is one-dimensional.

For both types of boundary conditions, infinite increasing sequences of bifurcation values exist. However, for all bifurcation values
larger than $\alpha_0$, the null spaces, and therefore the bifurcating solutions, consist of functions with changing signs, which are
irrelevant for our application and have no chance to be the long-time limit of non-negative solutions.

The normal form reduction close to the bifurcation point is derived by choosing values of $\alpha$ close to $\alpha_0$ and by 
making the ansatz 
\begin{equation}\label{eqn_le:ansatz}
   \begin{pmatrix}U(X,t)\\ V(X,t)\end{pmatrix} = (\alpha-\alpha_0)B(t) \begin{pmatrix}\tilde U(X)\\ \tilde V(X)\end{pmatrix} 
     + \mathcal{O}((\alpha-\alpha_0)^2) \,.
\end{equation}
For the linearized operator and its formal adjoint with respect to the scalar product in $L^2((0,1))^2$, the symmetry property
$$
  \mathcal{L}_\alpha^*(U,V) = \begin{pmatrix} \mathcal{L}_{\alpha,2} (V,U)\\\mathcal{L}_{\alpha,1} (V,U)\end{pmatrix}
$$
holds. Thus, the scalar product of \eqref{eqn_le:linearization}, written in the form
$$
  \partial_t \begin{pmatrix} U\\V\end{pmatrix} = \mathcal{L}_{\alpha_0} (U,V) 
  + \left( \alpha-\alpha_0 - \frac{\alpha\beta(U+V)}{1+\beta(U+V)} \right)\begin{pmatrix} V\\U\end{pmatrix} \,,
$$
with $(\tilde V,\tilde U)$ gives
$$
  \frac{\dd}{\dd t} \int_0^1 (U\tilde V + V\tilde U)\dd X = \int_0^1 
  \left( \alpha-\alpha_0 - \frac{\alpha\beta(U+V)}{1+\beta(U+V)} \right)(V\tilde V + U\tilde U)\dd X \,.
$$
Substitution of the ansatz \eqref{eqn_le:ansatz} leads to
\begin{equation}\label{eqn_le:normal-form}
  \frac{\dd B}{\dd t} = (\alpha - \alpha_0) B(\kappa_1 - a\kappa_2) + \mathcal{O}((\alpha-\alpha_0)^2) \,,
\end{equation}
with 
$$
  \kappa_1 = \frac{\int_0^1(\tilde U^2 + \tilde V^2)\dd X}{2\int_0^1 \tilde U \tilde V \dd X} > 0 \,,\qquad
  \kappa_2 = \frac{\alpha_0\int_0^1\beta(\tilde U + \tilde V)(\tilde U^2 + \tilde V^2)\dd X}{2\int_0^1 \tilde U \tilde V \dd X} > 0 \,.
$$
This is the normal form of the transcritical bifurcation. It indicates that for $\alpha<\alpha_0$ the trivial steady state is stable,
whereas for $\alpha>\alpha_0$, stability is transferred to the bifurcating steady state
\begin{equation}\label{eqn_le:stat-expansion}
  \begin{pmatrix} \bar U\\ \bar V\end{pmatrix} 
  = (\alpha-\alpha_0)\frac{\kappa_1}{\kappa_2}\begin{pmatrix} \tilde U\\ \tilde V\end{pmatrix} + \mathcal{O}((\alpha-\alpha_0)^2) \,.
\end{equation}
This result is of course only formal. For the steady state bifurcation, however,
our computations, in particular $\kappa_1,\kappa_2\ne 0$, verify the conditions of the Crandall-Rabinowitz theory
\cite{Crandall1971} for bifurcations from a simple eigenvalue. Without going into further detail, we shall state the result below.
We do not attempt to rigorously justify the dynamic normal form reduction \eqref{eqn_le:normal-form}, since the stability of the 
bifurcating state will be proved in the following section, anyway.

\begin{myprop}
\label{prop_le:bifurcation}
Let \eqref{eqn_le:cbound} hold and let $\alpha_0$ be defined as above. 
Then there exists $\alpha_1>\alpha_0$, such that the system \eqref{eqn_le:main} with \eqref{eqn_le:DBC} or \eqref{eqn_le:PBC} has 
a smooth (with respect to $\alpha$) branch of nontrivial solutions 
$(\bar u, \bar v): [\alpha_0,\alpha_1)\to\mathcal{C}([0,1])^2$ 
with $(\bar u,\bar v)\bigm|_{\alpha=\alpha_0} = (0,0)$ and with 
$$
  \frac{\dd}{\dd\alpha}\begin{pmatrix}\bar u\\ \bar v\end{pmatrix}(x)\Bigm|_{\alpha=\alpha_0} 
  = \frac{\kappa_1 C}{\kappa_2 c} \begin{pmatrix} \tilde U,\tilde V \end{pmatrix} \left(C\int_0^x \frac{\dd y}{c(y)}\right) \,. 
$$
In a neighborhood of the point $(u,v,\alpha)=(0,0,\alpha_0)$ in $\mathcal{C}([0,1])^2\times \mathbb{R}$
no other stationary solutions besides the trivial solution and the solutions on the nontrivial branch exist.
\end{myprop}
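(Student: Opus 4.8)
The plan is to apply the Crandall--Rabinowitz bifurcation-from-a-simple-eigenvalue theorem \cite{Crandall1971} to the stationary problem, using the variables $(X,U,V)$ in which the linearization has constant coefficients. First I would set up the functional-analytic framework: take $X$ to be an appropriate Banach space encoding the boundary conditions — for (PBC) the space of $\mathcal{C}^1$ functions on $\mathbb{T}^1$, for (DBC) the subspace of $\mathcal{C}^1([0,1])^2$ with $U(0)=0$, $V(1)=0$ — and $Y=\mathcal{C}([0,1])^2$, and define $F:X\times\mathbb{R}\to Y$ by $F(U,V,\alpha) = \mathcal{L}_\alpha(U,V) - \frac{\alpha\beta(U+V)}{1+\beta(U+V)}\binom{V}{U}$, so that stationary solutions of \eqref{eqn_le:Main} are exactly the zeros of $F$. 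Since $\beta$ is bounded and bounded away from zero, the nonlinear term is a smooth (indeed analytic in $(U,V)$ near $0$) map into $Y$, and $F$ is as smooth in $\alpha$ as $\beta$ allows; $F(0,0,\alpha)=0$ for all $\alpha$, giving the trivial branch.

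Next I would verify the hypotheses of Crandall--Rabinowitz at $(0,0,\alpha_0)$. The Fr\'echet derivative $D_{(U,V)}F(0,0,\alpha_0)=\mathcal{L}_{\alpha_0}$ has the one-dimensional null space spanned by $(\tilde U,\tilde V)$ — this is exactly the computation already carried out in the excerpt, $(1,1)$ for (PBC) and $(\sin(b_0X),\sin(b_0(1-X)))$ for (DBC). One must check that $\mathcal{L}_{\alpha_0}$ is Fredholm of index zero with one-dimensional cokernel; this follows because $\mathcal{L}_{\alpha_0}$ differs from the invertible operator $(U,V)\mapsto(-U-C\partial_X U,\,-V+C\partial_X V)$ (with these boundary conditions) by the bounded operator $\alpha_0\binom{V}{U}$, and the zeroth-order perturbation of an isomorphism between $X$ and $Y$ is Fredholm of index zero; one-dimensionality of the cokernel is read off from the adjoint, whose null space is spanned by $(\tilde V,\tilde U)$ by the stated symmetry $\mathcal{L}_\alpha^*(U,V)=\binom{\mathcal{L}_{\alpha,2}(V,U)}{\mathcal{L}_{\alpha,1}(V,U)}$. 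The transversality (eigenvalue-crossing) condition requires $D_\alpha D_{(U,V)}F(0,0,\alpha_0)(\tilde U,\tilde V)\notin \mathrm{Range}(\mathcal{L}_{\alpha_0})$, i.e. $\int_0^1(\tilde V\,\tilde V+\tilde U\,\tilde U)\dd X\ne 0$ after pairing with $(\tilde V,\tilde U)$; this is precisely $2\int_0^1\tilde U\tilde V\,\dd X\cdot\kappa_1\ne0$, which holds since $\tilde U,\tilde V\ge0$ and are not identically zero, hence $\int_0^1\tilde U\tilde V\,\dd X>0$ (for (DBC) one checks $\sin(b_0 X)\sin(b_0(1-X))>0$ on a set of positive measure, which is clear since both factors are positive near the relevant endpoints).

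Crandall--Rabinowitz then yields a neighborhood of $(0,0,\alpha_0)$ in $X\times\mathbb{R}$ in which the zero set of $F$ consists exactly of the trivial branch and a smooth curve $s\mapsto(U(s),V(s),\alpha(s))$ with $(U(0),V(0),\alpha(0))=(0,0,\alpha_0)$ and $(U'(0),V'(0))=(\tilde U,\tilde V)$. To get the statement as phrased — with $\alpha$ itself as the parameter and the branch defined on $[\alpha_0,\alpha_1)$ — I would use that $\kappa_1-a\kappa_2\ne0$ (transcritical, not pitchfork) implies $\alpha'(0)\ne0$, so the curve can be reparametrized by $\alpha$ near $\alpha_0$, giving $(\bar U,\bar V):[\alpha_0,\alpha_1)\to X$ smooth with $(\bar U,\bar V)|_{\alpha_0}=0$ and $\frac{\dd}{\dd\alpha}(\bar U,\bar V)|_{\alpha_0}=(U'(0),V'(0))/\alpha'(0)$; matching with the formal normal-form computation \eqref{eqn_le:stat-expansion} identifies the derivative as $\frac{\kappa_1}{\kappa_2}(\tilde U,\tilde V)$. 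Finally, undoing the transformation \eqref{eqn_le:transformation}, $(\bar u,\bar v)=\frac{C}{c(x)}(\bar U,\bar V)(X(x))$ with $X(x)=C\int_0^x\dd y/c(y)$ transfers regularity in $\alpha$ and yields the claimed formula for $\frac{\dd}{\dd\alpha}(\bar u,\bar v)$. I expect the main obstacle to be the bookkeeping in the functional-analytic setup — choosing function spaces so that $\mathcal{L}_{\alpha_0}$ is genuinely Fredholm of index zero between them with the transport terms handled correctly at the boundary, and confirming the smoothness class of $F$ in all variables — rather than any single deep estimate; everything substantive (null spaces, adjoint, the signs $\kappa_1,\kappa_2>0$) has essentially been done already in the preceding discussion.
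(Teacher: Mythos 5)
Your proposal is correct and follows the same route as the paper, which itself only sketches this proof by noting that the computations of the null spaces, the adjoint, and $\kappa_1,\kappa_2\ne 0$ verify the Crandall--Rabinowitz hypotheses; you supply the functional-analytic details (spaces, Fredholm property via compactness of the zeroth-order term $\mathcal{C}^1\hookrightarrow\mathcal{C}$, transversality, reparametrization by $\alpha$ using $\alpha'(0)=\kappa_2/\kappa_1\ne 0$) that the paper omits. The only cosmetic point is that the transversality pairing is simply $\int_0^1(\tilde U^2+\tilde V^2)\,\dd X>0$, so invoking $\kappa_1$ there is an unnecessary detour.
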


\begin{mylem}
\label{lem_le:positivity}
Let the assumptions of Proposition \ref{prop_le:bifurcation} hold. Then for $\alpha-\alpha_0>0$ small enough,
\begin{eqnarray*}
  &&\mbox{for (PBC):}\quad \bar u,\bar v > 0 \quad\mbox{in } [0,1] \,,\\
  &&\mbox{for (DBC):}\quad \bar u> 0 \quad\mbox{in } (0,1],\quad\bar v > 0 \quad\mbox{in } [0,1)\,.
\end{eqnarray*}
\end{mylem}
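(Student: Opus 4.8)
The plan is to exploit the asymptotic expansion \eqref{eqn_le:stat-expansion} of the bifurcating branch, which tells us that $(\bar u,\bar v)$ is, to leading order in $\delta:=\alpha-\alpha_0$, a positive multiple of the null-space element pushed back through the transformation \eqref{eqn_le:transformation}. Concretely, from Proposition \ref{prop_le:bifurcation} we have
\[
  \begin{pmatrix}\bar u\\ \bar v\end{pmatrix}(x) = \delta\,\frac{\kappa_1 C}{\kappa_2 c(x)}\begin{pmatrix}\tilde U\\ \tilde V\end{pmatrix}\!\left(C\int_0^x\frac{\dd y}{c(y)}\right) + \mathcal{O}(\delta^2)\,,
\]
with the $\mathcal{O}(\delta^2)$ term understood in $\mathcal{C}([0,1])^2$. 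Since $\kappa_1,\kappa_2>0$, $C>0$, and $c>0$, the leading-order profile is exactly $\tilde U(X(x))$ and $\tilde V(X(x))$ up to a strictly positive scalar field. So the statement reduces to transferring the (already recorded) positivity of $\tilde U,\tilde V$ to $\bar u,\bar v$ for $\delta$ small.

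For (PBC) this is immediate: there $(\tilde U,\tilde V)=(1,1)$, so $\bar u(x) = \delta\kappa_1 C/(\kappa_2 c(x)) + \mathcal{O}(\delta^2)$, which is bounded below by $\delta\kappa_1 C/(\kappa_2\overline c) - K\delta^2$ uniformly in $x$ for some constant $K$; this is positive once $\delta < \kappa_1 C/(\kappa_2\overline c K)$, and the same for $\bar v$. For (DBC) the subtlety is that $\tilde U(X)=\sin(b_0 X)$ vanishes at $X=0$ (and $\tilde V$ at $X=1$), so a naive uniform lower bound fails near the boundary, and one cannot simply absorb the $\mathcal{O}(\delta^2)$ remainder there. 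The main obstacle is precisely this boundary degeneracy, and the cleanest way around it is \emph{not} to fight the expansion near the endpoint but to use the structure of the steady-state equation itself near $x=0$.

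My preferred route for (DBC) is the following. Away from the boundary, say on $[\eta,1]$ for a fixed small $\eta>0$, we have $\tilde U(X(x))\ge \tilde U(X(\eta))>0$ uniformly (since $X(\cdot)$ is an increasing homeomorphism of $[0,1]$ and $\sin(b_0\,\cdot)>0$ on $(0,1]$), so the expansion gives $\bar u>0$ on $[\eta,1]$ for $\delta$ small, exactly as in the (PBC) case. Near the boundary we argue directly from the mild/steady formulation: the steady version of \eqref{eqn_le:mild-DBC} for $U$ reads, on $(0,\eta]$,
\[
  \bar U(X) = \int_0^{X/C} e^{-\tau}\,R_{\bar U,\bar V}(X - C\tau)\,\dd\tau\,,
\]
and the integrand is $\ge 0$ with $R_{\bar U,\bar V}(Y) = \alpha\bar V(Y)/(1+\beta(Y)(\bar U+\bar V)(Y))$. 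For $X\in(0,\eta]$ and $\tau\in(0,X/C)$ the argument $Y=X-C\tau$ ranges over $(0,X)\subset(0,\eta)$, which does not immediately help; so instead I would choose $\eta$ small enough that $\bar V>0$ on $(0,\eta]$ can be guaranteed first. Observe $\tilde V(X(x))=\sin(b_0(1-X(x)))$ is bounded below by a positive constant on any interval $[0,1-\eta']$, in particular on a full neighborhood of $x=0$; hence $\bar v>0$ on some fixed $[0,x_1]$ for $\delta$ small by the same expansion argument as for (PBC). Then for $x\in(0,x_1]$, $\bar u(x) = (c(0)/\text{stuff})\cdots$ — more transparently, $\bar U(X)$ is the integral above with a strictly positive integrand on a set of positive measure (since $\bar V>0$ on $(0,X)$ when $X\le X(x_1)$), so $\bar U(X)>0$, hence $\bar u(x)>0$, for all $x\in(0,x_1]$. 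Combining the interior estimate on $[\eta,1]$ (taking $\eta\le x_1$) with this boundary-layer estimate on $(0,x_1]$ covers all of $(0,1]$; the symmetric argument with $U\leftrightarrow V$, $X\leftrightarrow 1-X$ handles $\bar v>0$ on $[0,1)$.

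One should double-check that the threshold on $\delta$ coming from the interior expansion and the one from requiring $\bar v>0$ on $[0,x_1]$ are compatible — they are, since both are "for $\delta$ small enough" conditions and there are finitely many of them. The only genuinely delicate point is making the $\mathcal{O}(\delta^2)$ bound in Proposition \ref{prop_le:bifurcation} uniform in $x$, which is automatic because the branch is asserted to be a smooth curve into $\mathcal{C}([0,1])^2$ and hence its remainder is $\mathcal{O}(\delta^2)$ in sup norm; everything else is the elementary positivity-propagation observation already used in the proof of Proposition \ref{prop_le:positivity}.
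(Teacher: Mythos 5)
Your proposal is correct and follows essentially the same route as the paper: the uniform $\mathcal{O}((\alpha-\alpha_0)^2)$ expansion handles (PBC) and the interior for (DBC), and the boundary degeneracy for (DBC) is resolved by first noting $\bar v>0$ on a neighborhood of $x=0$ (since $\tilde V(0)=\sin b_0>0$) and then letting the steady-state equation propagate this to $\bar u$. The only cosmetic difference is in that last step: the paper reads off $\frac{\dd\bar U}{\dd X}(0)=\frac{\alpha}{C}\bar V(0)>0$ from the stationary equation, whereas you integrate along characteristics to write $\bar U(X)=\int_0^{X/C}e^{-\tau}R_{\bar U,\bar V}(X-C\tau)\,\dd\tau>0$; both hinge on exactly the same fact, $\bar V>0$ near $X=0$.
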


\begin{proof}
Proposition \ref{prop_le:bifurcation} justifies \eqref{eqn_le:stat-expansion}, with $\mathcal{O}((\alpha-\alpha_0)^2)$ to be understood uniformly in 
$X\in [0,1]$. This immediately implies the result, except for the case of (DBC), where the behavior of $\bar U$ close to $X=0$
and of $\bar V$ close to $X=1$ has to be examined. However, 
$$
   \bar U(0) = 0 \,,\quad \frac{\dd \bar U}{\dd X}(0) = \frac{\alpha}{C}\bar V(0) >0 \,,\qquad 
   \bar V(1) = 0\,,\quad \frac{\dd \bar V}{\dd X}(1) = -\frac{\alpha}{C}\bar U(1) < 0 \,,
$$
implies positivity of $\bar U$ near $X=0$ and of $\bar V$ near $X=1$, completing the proof.
\end{proof}

\subsection*{Periodic boundary conditions -- almost constant lateral flow speed}

In the case of periodic boundary conditions and of a constant lateral flow speed, the bifurcating branch of nontrivial solutions
is given by 
\begin{align}\label{eqn_le:const_sol}
  \begin{pmatrix}\bar u(x)\\  \bar v(x)\end{pmatrix} = \frac{\alpha-1}{2}\begin{pmatrix}1\\1\end{pmatrix}  \,,\qquad \alpha>1 \,.
\end{align}
So it is explicit, homogeneous, and global. We make use of these properties to construct a global branch for almost constant
lateral flow speeds, satisfying
$$
  c(x) = c_0 + \e c_1(x) \,,
$$
with a constant $c_0>0$, with a smooth function $c_1(x)$, and with a small parameter $\e$. Substitution of the transformation
$$
  \begin{pmatrix}\bar u(x)\\  \bar v(x)\end{pmatrix} = \frac{\alpha-1}{2}\begin{pmatrix}1\\1\end{pmatrix} 
  + \e (\alpha-1) \begin{pmatrix} u_1(x)\\ v_1(x)\end{pmatrix} 
$$
in \eqref{eqn_le:main} gives
\begin{align} \label{eqn_le:c-appr-const}
  \frac{\dd}{\dd x} \begin{pmatrix}u_1\\ v_1\end{pmatrix}-M(\alpha)\begin{pmatrix}u_1\\ v_1\end{pmatrix} 
   = h(x) + \e r(u_1,v_1,x;\alpha,\e)\,,
\end{align}
with
\begin{equation}
M(\alpha) =\frac{1}{2 c_0\alpha}\begin{pmatrix} 1-3\alpha && \alpha+1 \\ -\alpha-1 && 3\alpha-1 \end{pmatrix} \,, \qquad h(x)=-\frac{c_1'(x)}{2c_0}\begin{pmatrix} 1 \\ 1 \end{pmatrix} \,,\label{eqn_le:Mh}
\end{equation}
and where $r$ is smooth in all its variables and uniformly bounded in terms of $\alpha \to\infty$ and $\e\to 0$ with
commuting limits. It also satisfies 
\begin{equation} \label{eqn_le:r}
  r(\alpha=1) = \tilde r(u_1,v_1,x;\e)\begin{pmatrix}1\\1\end{pmatrix} \,.
\end{equation}
These observations are the result of a lengthy but straightforward computation. The next step is to rewrite \eqref{eqn_le:c-appr-const}
as a fixed point problem by inverting the linear operator on the left hand side on the space of periodic functions.
An explicit computation gives
\begin{equation}\label{eqn_le:u1v1}
\begin{pmatrix} u_1(x;\alpha,\e) \\ v_1(x;\alpha,\e)\end{pmatrix} =
e^{Mx} \left(\left(e^{-M}-\mathbb{I}\right)^{-1} \int_0^1 e^{-My}(h+\e r)(y)  \dd y+\int_0^x e^{-My}(h+\e r)(y)\dd y\right) \,,
\end{equation}
requiring the existence of the inverse of $e^{-M}-\mathbb{I}$, equivalent to the invertibility of $M$, which is true for
$\alpha>1$. Since $M(\alpha)$ also converges to an invertible matrix as $\alpha\to\infty$, the linear operator applied to $h+\e r$
on the right hand side is uniformly bounded in $\alpha\in [\alpha^*,\infty)$ with $\alpha^*>1$. Obviously, for $\e$ small enough,
solutions can be constructed by contraction on the space $\mathcal{C}_B(\mathbb{T}^1\times[\alpha^*,\infty))^2$ of bounded
continuous functions
of the periodic variable $x$ and of the parameter $\alpha$. We skip the details of the proof.

\begin{myprop}
Let $\alpha^*>1$ and let
\begin{align*}
c(x)=c_0+\e c_1(x) \,,\qquad\mbox{with } c_0>0 \,,\quad c_1 \in \mathcal{C}^1(\mathbb{T}^1) \,.
\end{align*}
Then there exist $\e_0,C>0$ such that for every $\e\le \e_0$ and for every $\alpha\ge\alpha^*$, the problem \eqref{eqn_le:main},
\eqref{eqn_le:PBC} with (PBC) has a smooth stationary solution $(\bar u,\bar v)$, satisfying
\begin{align*}
\|\bar u-(\alpha-1)(1/2 + \e u_1(\cdot\,;\alpha,0))\|_{L^\infty([0,1])} +
\|\bar v-(\alpha-1)(1/2 + \e v_1(\cdot\,;\alpha,0))\|_{L^\infty([0,1])}\leq \e^2 \alpha C \,,
\end{align*}
where $(u_1(x;\alpha,0), v_1(x;\alpha,0))\in \mathcal{C}_B(\mathbb{T}^1\times[\alpha^*,\infty))^2$ is given by \eqref{eqn_le:u1v1} with $\e=0$.
\end{myprop}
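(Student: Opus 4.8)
The plan is to set up and solve a fixed-point problem for the corrector $(u_1,v_1)$ appearing in the ansatz, exactly along the lines already sketched in the text before the statement, and then estimate the error between the true stationary solution and its approximation $(\alpha-1)(1/2+\e u_1(\cdot;\alpha,0))$. First I would substitute the ansatz $\bar u = \frac{\alpha-1}{2} + \e(\alpha-1)u_1$, $\bar v = \frac{\alpha-1}{2}+\e(\alpha-1)v_1$ into the stationary version of \eqref{eqn_le:main} and record the resulting system \eqref{eqn_le:c-appr-const}, verifying the explicit forms of $M(\alpha)$ and $h(x)$ in \eqref{eqn_le:Mh} and that the remainder $r$ is smooth, uniformly bounded as $\alpha\to\infty$ and $\e\to 0$ with commuting limits, and satisfies the structural identity \eqref{eqn_le:r} at $\alpha=1$. (The $\alpha=1$ identity is what guarantees the factor $\alpha-1$ can be pulled out consistently and prevents the correction from blowing up near the bifurcation; it is a bookkeeping point I would state but not belabor.)

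Next I would establish invertibility of $M(\alpha)$ for $\alpha>1$ — its determinant is computed directly from \eqref{eqn_le:Mh} and is nonzero there — and observe that $M(\alpha)$ converges to an invertible limit as $\alpha\to\infty$, so that $e^{-M(\alpha)}-\mathbb{I}$ is invertible with inverse bounded uniformly for $\alpha\in[\alpha^*,\infty)$. This makes the linear left-hand operator in \eqref{eqn_le:c-appr-const}, restricted to the space of $1$-periodic functions, boundedly invertible with the explicit Green's-function representation \eqref{eqn_le:u1v1}; call that bounded linear map $\mathcal{K}_\alpha$, so the problem becomes $(u_1,v_1) = \mathcal{K}_\alpha\big(h + \e r(u_1,v_1,\cdot;\alpha,\e)\big)$. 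Since $\mathcal{K}_\alpha$ is uniformly bounded in $\alpha$ and $r$ is Lipschitz in $(u_1,v_1)$ uniformly in $\alpha,\e$, for $\e\le\e_0$ small enough the right-hand side is a contraction on a fixed ball in $\mathcal{C}_B(\mathbb{T}^1\times[\alpha^*,\infty))^2$; Banach's fixed point theorem then yields a unique solution $(u_1(x;\alpha,\e),v_1(x;\alpha,\e))$ depending smoothly on its arguments, and in particular the value at $\e=0$, $(u_1(\cdot;\alpha,0),v_1(\cdot;\alpha,0))=\mathcal{K}_\alpha h$, is exactly \eqref{eqn_le:u1v1} with $\e=0$.

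Finally, for the quantitative error bound I would compare $(u_1(\cdot;\alpha,\e),v_1(\cdot;\alpha,\e))$ with $(u_1(\cdot;\alpha,0),v_1(\cdot;\alpha,0))$: subtracting the two fixed-point identities gives
$$
(u_1,v_1)(\cdot;\alpha,\e) - (u_1,v_1)(\cdot;\alpha,0)
= \mathcal{K}_\alpha\Big( \e\, r\big((u_1,v_1)(\cdot;\alpha,\e),\cdot;\alpha,\e\big)\Big) + \mathcal{K}_\alpha h - \mathcal{K}_\alpha h,
$$
so that, using $r(\alpha=1)$ structure to keep the $\alpha$-dependence linear and the uniform bounds on $\mathcal{K}_\alpha$ and $r$, the difference is $\mathcal{O}(\e)$ uniformly, and a second pass (inserting this $\mathcal{O}(\e)$ bound back, together with the Lipschitz dependence of $r$ on $\e$) upgrades it to the stated $\mathcal{O}(\e^2)$ once multiplied by the prefactor $\e(\alpha-1)$ — giving $\|\bar u - (\alpha-1)(1/2+\e u_1(\cdot;\alpha,0))\| + \|\bar v - (\alpha-1)(1/2 + \e v_1(\cdot;\alpha,0))\| \le \e^2\alpha C$. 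Smoothness of $\bar u,\bar v$ follows since the fixed point lies in a space of continuous functions, the right-hand side of \eqref{eqn_le:main} is smooth, and one bootstraps regularity from the ODE. The main obstacle I anticipate is not any single estimate but the careful uniform-in-$\alpha$ bookkeeping: one must check that all constants (the norm of $\mathcal{K}_\alpha$, the Lipschitz constant of $r$, the radius of the contraction ball, and the threshold $\e_0$) can be chosen independently of $\alpha\in[\alpha^*,\infty)$, which relies crucially on the commuting-limits property of $r$ and on $M(\alpha)$ having a well-behaved invertible limit at $\alpha=\infty$; getting the $\alpha$-powers to land exactly as $\e^2\alpha C$ rather than something worse is the delicate part.
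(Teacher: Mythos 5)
Your proposal follows essentially the same route as the paper, which itself only sketches this argument (ansatz substitution, inversion of the periodic linear operator via \eqref{eqn_le:u1v1}, uniform-in-$\alpha$ contraction on $\mathcal{C}_B(\mathbb{T}^1\times[\alpha^*,\infty))^2$, then comparison of the $\e$- and $0$-fixed points to get the $\mathcal{O}(\e^2\alpha)$ error); your filling-in of the details is correct. Two minor points: your displayed identity contains the vacuous cancellation $\mathcal{K}_\alpha h-\mathcal{K}_\alpha h$ (the difference of the fixed points is simply $\mathcal{K}_\alpha(\e\,r)$, so a single pass already yields the bound and no ``second pass'' is needed), and the invertibility of $e^{-M}-\mathbb{I}$ uses not just $\det M\ne 0$ but the fact that $M(\alpha)$ is trace-free with negative determinant, hence has real nonzero eigenvalues bounded away from $0$ on $[\alpha^*,\infty)$.
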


\begin{myrem}
Note the uniformity of the result in terms of $\alpha\in[\alpha^*,\infty)$. In particular the upper bound $\e_0$ for $\e$ and
the constant $C$ in the error estimate are independent from $\alpha\to\infty$. There is some subtility to the situation as
$\alpha\to 1$, since the matrix $(e^{-M(\alpha)} - \mathbb{I})^{-1}$ blows up in this limit. However the limiting right hand sides
being proportional to the vector $(1,1)$ (see \eqref{eqn_le:Mh}, \eqref{eqn_le:r}) satisfy the solvability conditions for $\alpha=1$, so that
a bounded passage to the limit can be expected. We do not carry out this limit in detail, since it is roughly equivalent to the 
bifurcation analysis above.
\end{myrem}

We conclude with an example: Setting 
\begin{equation}\label{eqn_le:c-ex}
  c(x)=1+\frac{3\e}{4} \cos(2\pi x)
\end{equation} 
yields
\begin{align}
\label{eqn_le:PBC_almost}
& u_1(x;\alpha,0)=-\frac{3\pi\left(2\alpha\pi \cos(2\pi x)\right)-(\alpha-1)\sin(2\pi x)}{8 \left(\alpha-1+2\pi^2\alpha\right)}\,,\\
&v_1(x;\alpha,0)=-\frac{3\pi\left(2\alpha\pi \cos(2\pi x)\right)+(\alpha-1)\sin(2\pi x)}{8 \left(\alpha-1+2\pi^2\alpha\right)} \,.\nonumber
\end{align}
Figure \ref{fig_le:PBC_almost} shows the approximate steady state solution 
$(\alpha-1)(1/2 + \e u_1(x;\alpha,0), 1/2 + \e v_1(x;\alpha,0))$ together with a numerical solution computed as a
steady state of the time dependent problem. The lateral flow in the example corresponds qualitatively to a simulation with the full
Filament Based Lamellipodium Model \cite{MOSS}, where the cell is moving and the front is located at $x=0$ (identified with $x=1$) and the back at $x=0.5$. The lateral flow speed $c(x)$ has its minimum at the back and its maximum at the front. The steady state distributions of both filament families have their maximum at the back, due to the accumulation of filaments by the lateral flow. 
The maximum of the right-moving family is shifted slightly to the left as compared to the cell rear and vice versa for the left-moving filaments. The reason for this seemingly counter-intuitive result is that filaments of the right-moving family are produced by left-moving filaments, which shifts the maximum of the right-moving family to the left.

\begin{figure}[h]
\caption{Approximate and numerical solution for $\alpha=10$ and $\e=0.1$ with $c(x)$ given by \eqref{eqn_le:c-ex}. The end density of the right moving filaments $u$ is depicted in blue (dashed) and that of the left moving filaments $v$ (solid) in red. Thin lines are the
asymptotic approximations, thick lines are the numerical solution of the time dependent problem after $t=20$ and the left $y$-axis applies. In green (dotted) the lateral flow $c(x)$ is depicted, the values are in relation to the right $y$-axis.}
  \centering
\includegraphics[width=0.8\textwidth]{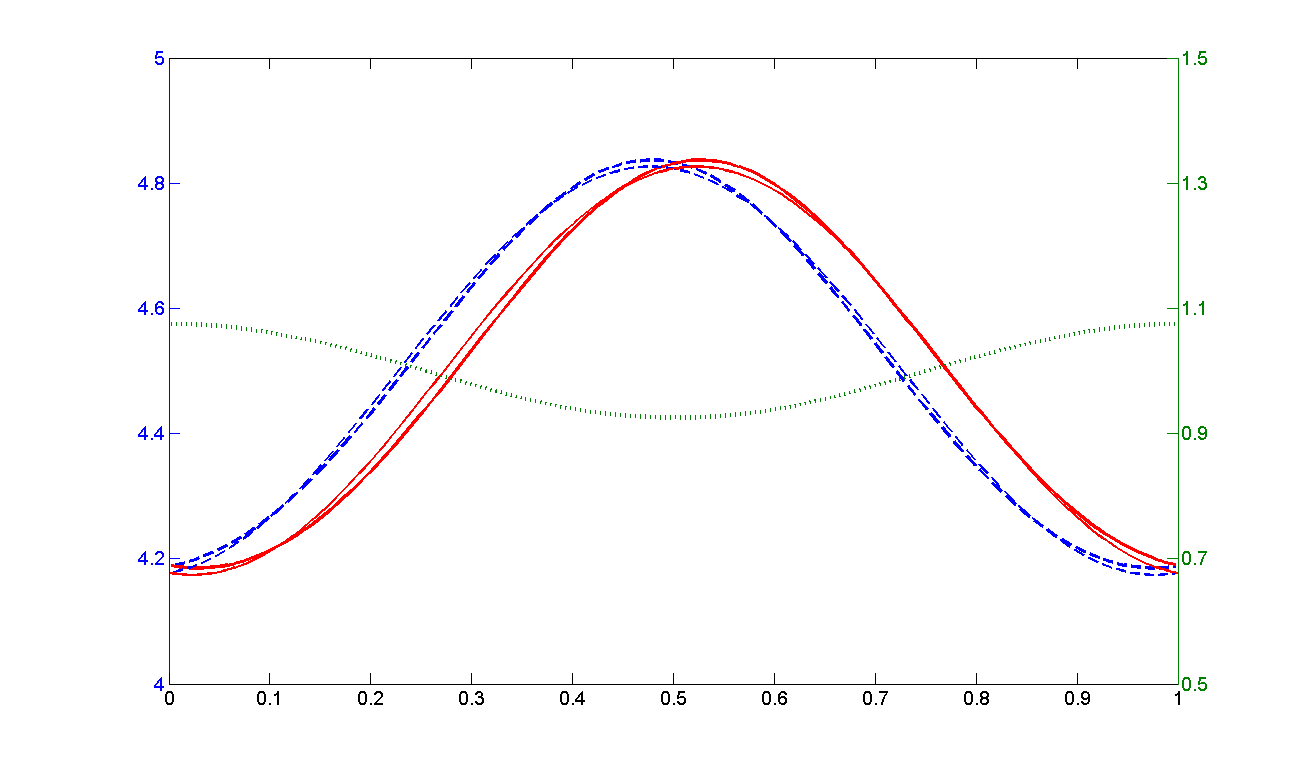}
\label{fig_le:PBC_almost}
\end{figure}

\subsection*{Dirichlet boundary conditions -- constant lateral flow speed}

In terms of the new unknowns $p=\bar u+\bar v$, $q=\bar u-\bar v$, the stationary version of \eqref{eqn_le:main} with constant lateral flow speed $c$ can be written as
\begin{equation}\label{eqn_le:pq}
  c p^\prime = -q\left( 1+ \frac{\alpha}{1+p}\right) \,,\qquad cq^\prime = p\left( \frac{\alpha}{1+p} - 1\right) \,.
\end{equation}
The boundary conditions (DBC) translate to
$$
  p(0)+q(0) = 0 \,,\qquad p(1)-q(1) = 0\,.
$$
The bifurcating solutions constructed above have the symmetry
$$
   \bar u(x) = \bar v(1-x) \qquad\longleftrightarrow\qquad p(x) = p(1-x) \,,\quad q(x) = -q(1-x) \,,
$$
and we shall look for solutions with this property. This allows to reduce the problem to the interval $[0,1/2]$ with the
boundary conditions
$$
  p(0)+q(0) = 0 \,,\qquad q(1/2) = 0\,.
$$
Viewing \eqref{eqn_le:pq} as a dynamical system and assuming $\alpha>1$, it has the critical points $(p,q)=(0,0)$, which is a center,
and the saddle $(p,q)=(\alpha-1,0)$. We look for solutions following trajectories in the region bounded by $p+q=0$, by $q=0$, 
and by the stable manifold of the saddle. The system has the first integral
\begin{equation}\label{eqn_le:1st-int}
  E_0 = q^2 + E(p,\alpha) \,,\qquad\mbox{with } E(p,\alpha) = - p^2 + 4\alpha p - 4\alpha(\alpha+1)\ln\left( 1 + \frac{p}{\alpha+1}\right) \,,
\end{equation}
which has been used for drawing the trajectories in Fig. \ref{fig_le:phaseportrait}. 

\begin{figure}[h]
\caption{This figure shows the trajectories in $(p,q)$-phase space of the stationary equation with constant $c$ and $(DBC)$. The orange, diagonal lines represent the boundary conditions $q=-p$ at $x=0$ and $q=p$ at $x=1$. Dashed curves represent \eqref{eqn_le:1st-int} for different values of $E_0$ and the blue lines the corresponding solutions curves fulfilling $p(0)+q(0)=0$. The red line is desired the solution satisfying both boundary conditions.}
  \centering
\includegraphics[width=0.6\textwidth]{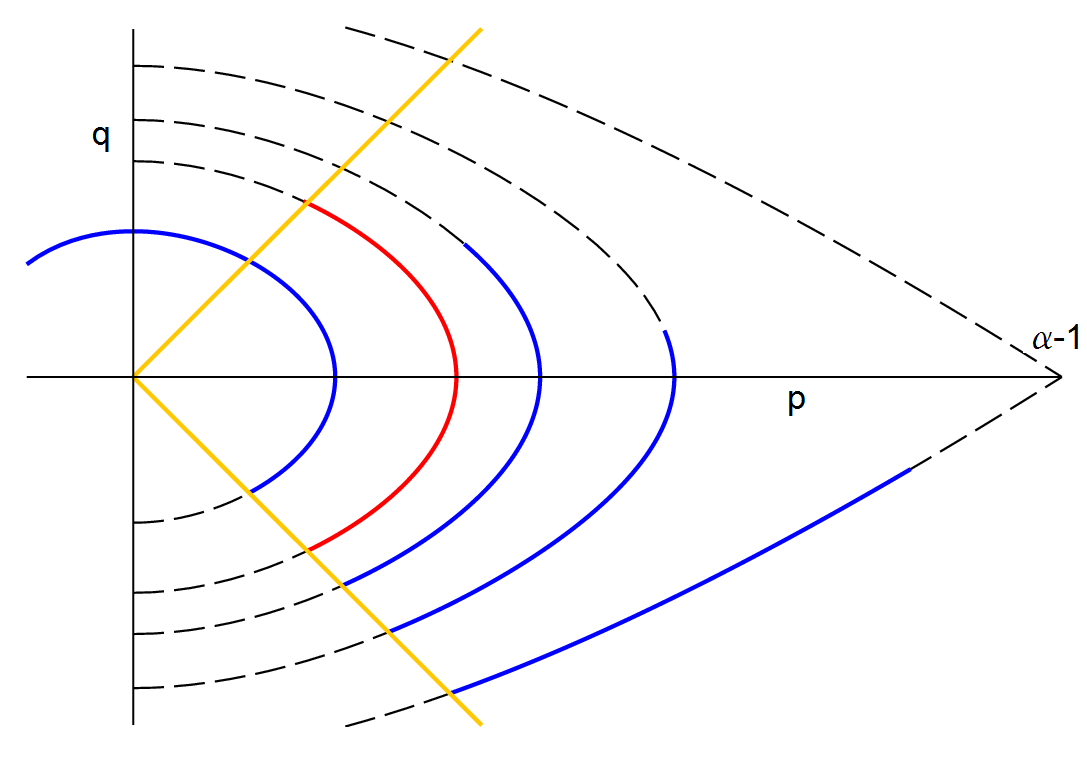}
\label{fig_le:phaseportrait}
\end{figure}

The above mentioned region corresponds to 
$$
E_0\in [0,E^*(\alpha)] \,,\qquad \mbox{with }  E^*(\alpha) = E(\alpha-1,\alpha) = (3\alpha+1)(\alpha-1) 
- 4\alpha(\alpha+1)\ln\left(\frac{2\alpha}{\alpha+1}\right) \,,
$$
where $E_0=0$ corresponds to the origin and $E_0=E^*(\alpha)$ to the stable manifold of the saddle. Among the trajectories 
connecting the segments $p+q=0$ and $q=0$, we need one which takes 'time' $1/2$.
To find an appropriate value of $E_0$, we compute $q<0$ from \eqref{eqn_le:1st-int}, substitute it in the first equation in \eqref{eqn_le:pq},
and integrate. This produces an equation for $E_0$:
\begin{equation}\label{eqn_le:I}
  I(E_0,\alpha) = c\int_{p_0(E_0,\alpha)}^{p_1(E_0,\alpha)} \frac{\dd p}{(1+\alpha/(1+p))\sqrt{E_0 - E(p,\alpha)}} = \frac{1}{2} \,,
\end{equation}
where $p(0)=p_0(E_0,\alpha)$ and $p(1/2)=p_1(E_0,\alpha)$ are the unique solutions of
$$
   E_0 = E(p_0,\alpha) + p_0^2 \,,\qquad E_0 = E(p_1,\alpha) \,.
$$
Information about the range of $I(\cdot,\alpha)$ is needed. For studying the limit as $E_0\to 0$, the Taylor expansion
$E(p,\alpha) = \frac{\alpha-1}{\alpha+1}p^2 + \mathcal{O}(p^3)$ and its consequence $p_0\approx \sqrt{E_0(\alpha+1)/(2\alpha)}$,
$p_1\approx \sqrt{E_0(\alpha+1)/(\alpha-1)}$, imply
$$
  I(0,\alpha) = \frac{c}{\sqrt{\alpha^2 - 1}} \left( \frac{\pi}{2} - \arcsin\sqrt{\frac{\alpha - 1}{2\alpha}}\right) \,.
$$
As a consistency check, it is easily verified that the unique solution of the equation $I(0,\alpha)=1/2$ is the bifurcation value
$\alpha_0$ defined by \eqref{eqn_le:b}, \eqref{eqn_le:alpha0}. The uniqueness follows since $I(0,\alpha)$ is strictly decreasing as 
a function of $\alpha$, i.e. 
$$
  I(0,\alpha) < 1/2 \qquad \mbox{iff } \alpha > \alpha_0 \,.
$$
On the other hand, by $p_1(E^*(\alpha),\alpha) = \alpha-1$ and $\partial_p E(\alpha-1,\alpha)$ the integrand in \eqref{eqn_le:I}
looses its integrability as $E_0\to E^*(\alpha)$, implying
$$
  \lim_{E_0\to E^*(\alpha)}  I(E_0,\alpha) = \infty \,,
$$
which is actually obvious, since along the stable manifold of the saddle the critical point and (therefore the line $q=0$) cannot be
reached in finite 'time'. These results and the continuity of $I(E_0,\alpha)$ complete the proof of the following proposition.

\begin{myprop}
\label{thm_le:solno_alt}
Let $\alpha>\alpha_0$ as defined by \eqref{eqn_le:b}, \eqref{eqn_le:alpha0} and let $c(x)=\,$const. Then the equation \eqref{eqn_le:main} together with $(DBC)$ has a non-trivial stationary solution.
\end{myprop}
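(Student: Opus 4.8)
The plan is to prove existence by a shooting argument in the $(p,q)$ phase plane, converting the boundary value problem into a single scalar equation and then solving that equation with the intermediate value theorem.

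\textbf{Reduction to a scalar equation.} Using $p=\bar u+\bar v$, $q=\bar u-\bar v$, the stationary system takes the planar form \eqref{eqn_le:pq}, which has the conserved quantity $E_0=q^2+E(p,\alpha)$ of \eqref{eqn_le:1st-int}; for $\alpha>1$ the origin is a center and $(\alpha-1,0)$ a saddle. I would search for the symmetric solutions $p(x)=p(1-x)$, $q(x)=-q(1-x)$, which satisfy (DBC) on $[0,1]$ as soon as they satisfy $p(0)+q(0)=0$ and $q(1/2)=0$ on $[0,1/2]$. Geometrically these correspond to trajectories in the region enclosed by $\{p+q=0\}$, $\{q=0\}$ and the stable manifold of the saddle, travelled from $\{p+q=0\}$ (with $q<0$, so $p>0$) to $\{q=0\}$. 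Each such trajectory is a level set $\{q^2+E(p,\alpha)=E_0\}$ with $q<0$, $E_0\in(0,E^*(\alpha)]$, and — since $E_0$ is conserved — parametrizing it by the first equation $cp'=-q(1+\alpha/(1+p))$ with $q=-\sqrt{E_0-E(p,\alpha)}$ automatically produces a solution of the full system \eqref{eqn_le:pq}. Its transit `time' from $\{p+q=0\}$ to $\{q=0\}$ is exactly $I(E_0,\alpha)$ of \eqref{eqn_le:I}. Thus everything reduces to finding $E_0\in(0,E^*(\alpha))$ with $I(E_0,\alpha)=1/2$.

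\textbf{Boundary behaviour of $I$.} I would first identify $\lim_{E_0\to 0^+}I(E_0,\alpha)=I(0,\alpha)$ using the Taylor expansion $E(p,\alpha)=\frac{\alpha-1}{\alpha+1}p^2+\mathcal O(p^3)$, which reduces the integral near the colliding endpoints $p_0,p_1$ to an elementary $\arcsin$-integral and gives $I(0,\alpha)=\frac{c}{\sqrt{\alpha^2-1}}\bigl(\pi/2-\arcsin\sqrt{(\alpha-1)/(2\alpha)}\bigr)$. A short computation shows $I(0,\cdot)$ is strictly decreasing, and the consistency check $I(0,\alpha_0)=1/2$ (which recovers the bifurcation value from \eqref{eqn_le:b}, \eqref{eqn_le:alpha0}) then gives $I(0,\alpha)<1/2$ precisely for $\alpha>\alpha_0$. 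At the opposite end, as $E_0\to E^*(\alpha)^-$ the right endpoint $p_1(E_0,\alpha)$ tends to $\alpha-1$ where, by $\partial_p E(p,\alpha)=\frac{2p(\alpha-1-p)}{\alpha+1+p}$, one has $\partial_p E(\alpha-1,\alpha)=0$; hence $\sqrt{E_0-E(p,\alpha)}$ has there a simple zero rather than a square-root zero, the integrand loses integrability, and $I(E_0,\alpha)\to\infty$ (as it must, since the saddle cannot be reached in finite `time').

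\textbf{Conclusion.} Since $p_0,p_1$ depend continuously on $E_0$ and the endpoint singularities of the integrand are integrable with locally uniform bounds, $I(\cdot,\alpha)$ is continuous on $(0,E^*(\alpha))$. For $\alpha>\alpha_0$ it starts below $1/2$ and tends to $+\infty$, so by the intermediate value theorem there is $E_0^\star\in(0,E^*(\alpha))$ with $I(E_0^\star,\alpha)=1/2$. The associated trajectory gives $p,q$ on $[0,1/2]$ with $p\ge p_0>0$, $p$ increasing and $q\in[-p_0,0]$ increasing; extending by the symmetry above and setting $\bar u=(p+q)/2$, $\bar v=(p-q)/2$ yields a stationary solution of \eqref{eqn_le:main} with (DBC). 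One checks that $1+p>0$ (so the Michaelis--Menten term is well defined), that $\bar u,\bar v\ge 0$ (because $p\ge|q|$ along the trajectory), and that the solution is non-trivial, since $E_0^\star>0$.

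\textbf{Main obstacle.} The only genuinely delicate point is the analysis of the improper integral $I(E_0,\alpha)$ at its two extreme energies: at $E_0=0$ the two integration endpoints coalesce at the degenerate minimum of $E(\cdot,\alpha)$, and at $E_0=E^*(\alpha)$ one endpoint reaches the degenerate maximum. In each case a careful local expansion of $E(p,\alpha)$ is needed to pin down the singularity type (square-root, hence integrable and yielding a finite limit $I(0,\alpha)$; versus linear, hence producing the divergence) and to make the limiting statements rigorous. The phase-plane picture, the symmetry reduction, and the final intermediate-value argument are then routine.
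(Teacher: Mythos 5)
Your proposal is correct and follows essentially the same route as the paper: the symmetry reduction to $[0,1/2]$, the first integral $E_0=q^2+E(p,\alpha)$, the transit-time integral $I(E_0,\alpha)$ with the limits $I(0,\alpha)<1/2$ for $\alpha>\alpha_0$ and $I(E_0,\alpha)\to\infty$ as $E_0\to E^*(\alpha)$, and the intermediate value theorem. Your added checks (the explicit formula $\partial_p E=2p(\alpha-1-p)/(\alpha+1+p)$, the sign analysis giving $\bar u,\bar v\ge 0$, and the identification of the singularity types at the two extreme energies) are accurate and only make explicit what the paper leaves implicit.
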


\begin{myrem}
Uniqueness of the steady state is equivalent to strict monotonicity of $I(E_0,\alpha)$ as a function of $E_0$.
This will be proved indirectly by the decay result of the next section.
\end{myrem}

\section{Stability of Nontrivial Steady States}
\label{sec_le:lyapunov}
 
The main result of this section is that whenever a nontrivial steady state, as constructed (for certain cases) in the previous section,
exists then it attracts all nontrivial solutions. We therefore make the assumption that there exists a stationary solution 
$(\bar u(x),\bar v(x))$ of \eqref{eqn_le:main}, satisfying
\begin{eqnarray}
   \mbox{either}&&\mbox{(PBC),} \quad (\bar u,\bar v) \in \mathcal{C}^1(\mathbb{T}^1)^2 \,,\quad \mbox{and}\quad
      m \le\bar u(x),\bar v(x) \le M \,,\label{eqn_le:PBC-ass}\\
   \mbox{or}&&\mbox{(DBC),} \quad (\bar u,\bar v) \in \mathcal{C}^1([0,1])^2 \,,\quad \mbox{and}\quad
     m \le \frac{\bar u(x)}{x}, \frac{\bar v(x)}{1-x} \le M\,,\label{eqn_le:DBC-ass}
\end{eqnarray}
for $x\in[0,1]$, with positive constants $m,M$.

For proving exponential convergence to nontrivial steady states, we need to strengthen the result of Proposition \ref{prop_le:positivity}.
Note that the following result is sharp in terms of the values of the parameter $\alpha$, proving uniform-in-time bounds away
from zero for all parameter values above the bifurcation value, where the zero solution looses its stability.

\begin{mylem}
\label{lem_le:positivity}
Let the assumptions of Proposition \ref{prop_le:positivity} hold and let $\alpha>\alpha_0$, where $\alpha_0$ is the bifurcation value
as defined in the preceding section. Then for (PBC) there exist $T,m>0$ such that 
$$
u(x,t), v(x,t)\geq m \qquad \text{for}\ (x,t)\in[0,1]\times[T,\infty)\,.
$$
For (DBC) there exist $T,a>0$ such that 
$$
u(x,t) \ge a\sin\left(b_0C\int_0^x \frac{\dd y}{c(y)}\right),\ v(x,t)\geq a\sin\left(b_0C\int_x^1 \frac{\dd y}{c(y)}\right),
   \ \text{for } (x,t)\in[0,1]\times[T,\infty)\,.
$$
Furthermore, again for (DBC), there exists a constant $M>0$ such that
$$
  u(x,t) \le M x \,,\quad v(x,t) \le M(1-x) \,,\qquad\text{for } (x,t)\in[0,1]\times[T,\infty)\,.
$$
\end{mylem}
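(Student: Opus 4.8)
The plan is to reduce everything, via the transformation \eqref{eqn_le:transformation} and the two-sided bounds \eqref{eqn_le:betabound} on $\beta$, to bounds on the solution $(U,V)$ of \eqref{eqn_le:Main}, to construct explicit \emph{time-independent} sub- and supersolutions of \eqref{eqn_le:Main}, and to propagate the positivity of Proposition \ref{prop_le:positivity} forward in time by a comparison principle. Since $0<\underline c\le c\le\overline c$, a bound $U(X,t)\ge a\sin(b_0 X)$ with $X=C\int_0^x\dd y/c(y)$ translates back into $u(x,t)\ge (aC/\overline c)\sin\bigl(b_0 C\int_0^x\dd y/c(y)\bigr)$, and similarly the upper bounds and the (PBC) statement follow from their $(U,V)$-analogues; so it suffices to work with $(U,V)$.

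The key tool is a comparison principle for \eqref{eqn_le:Main}. The reaction term is cooperative: $\partial_V R_{U,V}=\alpha(1+\beta U)/(1+\beta(U+V))^2>0$ and, by symmetry, $\partial_U R_{V,U}>0$ on $[0,\infty)^2$, while the transport is pure advection at the constant speeds $\pm C$. Hence, if $(\underline U,\underline V)$ is a classical, time-independent subsolution of \eqref{eqn_le:Main} that respects the Dirichlet data at $X=0$, $X=1$ (for (PBC) nothing is required here), and if $\underline U\le U$, $\underline V\le V$ at some time $t_0$, then $\underline U\le U$, $\underline V\le V$ for all $t\ge t_0$; an analogous statement holds for supersolutions from above. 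This is proved in the usual way: adding $\lambda(U,V)$ to both sides of \eqref{eqn_le:Main} makes the right-hand side componentwise monotone on the bounded invariant region of Proposition \ref{prop_le:IVP}, after which one integrates along characteristics and applies a Gronwall estimate; for smooth data this is classical, and the general case follows by the smooth-approximation argument of the Remark following Proposition \ref{prop_le:IVP} (equivalently, one may argue directly from the mild identities of Definition \ref{def_le:mild}).

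Next I construct the sub- and supersolutions, and this is where $\alpha>\alpha_0$ is used. For (PBC) ($\alpha_0=1$) take $\underline U=\underline V=m$ constant; the subsolution inequality is $m\bigl(\alpha/(1+2\beta m)-1\bigr)\ge0$, which holds for all $X$ once $m\le m^*:=(\alpha-1)/(2\overline\beta)>0$. For (DBC) take $\underline U=a\,\tilde U$, $\underline V=a\,\tilde V$ with $(\tilde U,\tilde V)=(\sin(b_0 X),\sin(b_0(1-X)))$ the eigenfunction from \eqref{eqn_le:alpha0}; using $C\tilde U'=\alpha_0\tilde V-\tilde U$, the subsolution inequality for the $U$-equation collapses to $a\tilde V\bigl(\alpha_0-\alpha/(1+\beta a(\tilde U+\tilde V))\bigr)\le0$, which is trivial where $\tilde V=0$ (only $X=1$) and otherwise holds as soon as $\alpha_0(1+2\overline\beta a)\le\alpha$, i.e. for $a\le a^*:=(\alpha-\alpha_0)/(2\alpha_0\overline\beta)>0$; the $V$-equation is symmetric, and $\underline U(0)=\underline V(1)=0$ matches the Dirichlet data. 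For the (DBC) upper bound take $\overline U=b\,\tilde U$, $\overline V=b\,\tilde V$; the analogous computation gives $b\tilde V\bigl(\alpha_0-\alpha/(1+\beta b(\tilde U+\tilde V))\bigr)\ge0$, valid once $\alpha_0(1+\underline\beta\mu_0 b)\ge\alpha$ with $\mu_0:=\min_{[0,1]}(\tilde U+\tilde V)>0$, i.e. for all $b\ge b^*:=(\alpha-\alpha_0)/(\alpha_0\underline\beta\mu_0)$; since $\tilde U(X)\le b_0 X$ and $\tilde V(X)\le b_0(1-X)$, the bound $U\le b\tilde U$, $V\le b\tilde V$ gives the claimed linear bounds after transforming back.

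Finally I start the comparison at a fixed time $T_*>2\int_0^1\dd x/c(x)=2/C$, chosen large enough that Proposition \ref{prop_le:positivity} makes $U,V$ strictly positive in the interior throughout a time-neighbourhood of $T_*$. For (PBC), $U(\cdot,T_*),V(\cdot,T_*)$ have a common positive minimum $\mu$ on the torus, and $m:=\min(\mu,m^*)$ works. For (DBC) I need $U(\cdot,T_*)\ge a_1\tilde U$ and (symmetrically) $V(\cdot,T_*)\ge a_1'\tilde V$: away from the endpoints this is immediate from strict positivity and compactness, while near $X=0$ the mild formula \eqref{eqn_le:mild-DBC} gives $U(X,T_*)=\int_{T_*-X/C}^{T_*}e^{s-T_*}R_{U,V}(X+C(s-T_*),s)\dd s$, and since $V$ is continuous and positive, and $U,V$ bounded by Proposition \ref{prop_le:IVP}, on the compact set visited by these characteristics, $R_{U,V}$ is bounded below there by a positive constant, whence $U(X,T_*)\ge\mathrm{const}\,(1-e^{-X/C})\ge\mathrm{const}\,X\ge\mathrm{const}\,\tilde U(X)$; symmetrically for $V$ near $X=1$. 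For the upper bound the same formula with $R_{U,V}\le\alpha/\underline\beta$ gives $U(X,T_*)\le\mathrm{const}\,X\le\mathrm{const}\,\tilde U(X)$ near $X=0$, while $U(\cdot,T_*)$ is bounded elsewhere, so $U(\cdot,T_*)\le b_1\tilde U$ for $b_1$ large; similarly for $V$. Choosing $m$, $a=\min(a^*,a_1,a_1')$, $b=\max(b^*,b_1,b_1')$ and applying the comparison principle from $t=T_*$, then transforming back to $(u,v,x)$, yields all three assertions with $T=T_*$. I expect the only genuinely delicate point to be the comparison principle in the low-regularity (mild) setting — hence the smooth-approximation detour — together with the endpoint compatibility in the (DBC) case; the quantitative near-boundary estimates from the mild formula at $t=T_*$ are elementary but must be carried out with some care.
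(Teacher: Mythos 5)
Your proof is correct and follows essentially the same strategy as the paper: scaled versions of the constant state (PBC) resp.\ of the eigenfunction $(\sin(b_0X),\sin(b_0(1-X)))$ (DBC) serve as time-independent subsolutions with exactly the paper's threshold $(\alpha-\alpha_0)/(2\alpha_0\overline\beta)$, the initial comparison at a finite time comes from Proposition \ref{prop_le:positivity} together with a linear-in-$X$ estimate near the Dirichlet boundary, and the bound is propagated forward by the cooperative comparison principle, which the paper invokes more informally (``assume the bound for $V$, verify it for $U$ along characteristics''). The only cosmetic difference is the (DBC) upper bound, where the paper simply integrates $C\,\dd U/\dd X\le\alpha/\underline\beta$ along characteristics emanating from $X=0$ rather than using the supersolution $b(\tilde U,\tilde V)$.
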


\begin{proof} (PBC): As a consequence of Proposition \ref{prop_le:positivity}, there exist $m_0, T>0$, such that 
$$
  u(x,T), v(x,T) \ge m_0 > 0 \qquad\mbox{for } x\in[0,1] \,.
$$
We choose 
$$
  M = \frac{\underline c}{C}\min\left\{ m_0, \frac{\alpha-1}{2} \right\} >0 \,,
$$
implying 
$$
  U(X,T), V(X,T) \ge M  \qquad\mbox{for } X\in[0,1] \,.
$$
Assume the bound holds for $V$ for later times and consider the equation along
characteristics for $U$:
$$
  \dot U = \frac{\alpha V}{1+\beta(U+V)} - U \ge \frac{\alpha M}{1 + \bar\beta (U+M)} - U \,.
$$
The right hand side is nonnegative for $U=M$ with the consequence 
$U\ge M$. Analogously for $V$. This completes the proof with $m = MC/\bar c$.\\
(DBC): By the result of Proposition \ref{prop_le:positivity}, there exist $M>0$, $X_0\in(0,1)$, $t_2>t_1\ge 0$, such that 
$V \ge M$ in $[0,X_0]\times[t_1,t_2]$. Along $U$-characteristics staying inside this rectangle,
$$
    \dot U\ge \frac{\alpha M}{1+ \bar\beta(U+M)} - U 
   \ge \frac{\alpha M}{2(1 + 2\bar\beta M)} \,,
$$
holds, as long as, furthermore, 
$$
  U \le \min\left\{ M, \frac{\alpha M}{2(1 + 2\bar\beta M)}\right\} \,.
$$
For characteristics starting at $X=0$, $t=t^*\in [t_1,t_2)$, this implies
$$
  U \ge \frac{\alpha M}{2(1 + 2\bar\beta M)} (t-t^*) = \frac{\alpha M}{2(1 + 2\bar\beta M)} \,\frac{X}{C} \,,
$$
until 
$$
 U = \min\left\{ M, \frac{\alpha M}{2(1 + 2\bar\beta M)}\min\left\{ 1,t_2-t^*, X_0/C \right\}\right\} \,.
$$
As a conclusion, there is a time $T\in (t_1,t_2]$ such that $U(X,T)$ (and therefore also $u(x,T)$) increases (as a 
function of $X$) at least linearly away from $X=0$ up to a certain point, after which, by Proposition \ref{prop_le:positivity}, it is 
bounded from below by a positive constant. This implies the existence of $A_0>0$, such that 
$$
  U(X,T) \ge A_0 \tilde U(X) \,,\qquad V(X,T) \ge A_0 \tilde V(X)\,,
$$
with $\tilde U, \tilde V$ defined by \eqref{eqn_le:b}, \eqref{eqn_le:alpha0}, and the second inequality is proven analogously. Now we choose 
$$
  A := \min\left\{ A_0, \frac{\alpha-\alpha_0}{2\alpha_0\bar\beta}\right\} \,,
$$
and assume $V(X,t)\ge A \tilde V(X)$ for $(X,t)\in [0,1]\times[T,\infty)$, in the equation for $U$. This implies
$$
  C\frac{\dd U}{\dd X} \ge  \frac{\alpha A\tilde V}{1 + \bar\beta (U+A\tilde V)} - U \,,\qquad t\ge T\,.
$$
We claim that $U(X) = A\tilde U(X)$ is a subsolution for $t\ge T$, which follows from 
$A\tilde U(X) \le U(X,T)$ and 
\begin{eqnarray*}
  C\frac{\dd (A\tilde U)}{\dd X} &=& CAb_0 \cos(b_0 X) = -A \tan b_0\cos(b_0 X) \\
  &=& \alpha_0 [A (\sin b_0\cos(b_0 X) - \sin(b_0 X)\cos b_0)] - A\sin(b_0 X) \\
  &=& \frac{\alpha A \tilde V}{1 + (\alpha/\alpha_0-1)} - A\tilde U
  \le \frac{\alpha A\tilde V}{1 + 2\bar\beta A} - A\tilde U 
  \le \frac{\alpha A\tilde V}{1 + \bar\beta(A\tilde U + A\tilde V)} - A\tilde U \,.
\end{eqnarray*}
On the other hand, it can be proved analogously that $U \ge A\tilde U$ implies $V \ge A\tilde V$. This completes
the proof of the lower bound with $a:= AC/\overline c$.

For the proof of the upper bound we observe that along characteristics 
$$
  C\frac{\dd U}{\dd X} \le \frac{\alpha}{\underline\beta} \,,
$$
implying at most linear growth of $U$ in terms of $X$, and therefore also of $u$ in terms of $x$, with the analogous argument
for $v$.
\end{proof}

The convergence analysis is based on the Lyapunov functional
\begin{equation}
  \mathcal{H}[u,v] := \frac{1}{2}\int_0^1 c\left(\frac{\bar v}{\bar u}(u-\bar u)^2
    +\frac{\bar u}{\bar v}(v-\bar v)^2\right)\dd x \,.\label{eqn_le:lya}
\end{equation}
Note that under the assumptions of Lemma \ref{lem_le:positivity} and with \eqref{eqn_le:PBC-ass} or \eqref{eqn_le:DBC-ass}, the Lyapunov functional
is well defined along the solution of \eqref{eqn_le:main} for $t\ge T$, since for (DBC) the integrand can be continuously extended 
by the value zero to $x=0,1$. For the computation of the time derivative of $\mathcal{H}[u,v]$ along solutions, the computation
\begin{align*}
-\int_0^1 c\frac{\bar v}{\bar u}&\left(u-\bar u\right)\partial_x(c u)\dd x\\
&=-\int_0^1 \frac{c\bar v}{c\bar u} \partial_x\frac{(c u-c \bar u)^2}{2}\dd x
   -\int_0^1 c\frac{\bar v}{\bar u}\left(u-\bar u\right)\partial_x(c \bar u)\dd x\\
&= \int_0^1 \frac{(c u-c\bar u)^2}{2} \,\frac{\bar u \partial_x(c\bar v) - \bar v \partial_x(c\bar u)}{c \bar u^2}\dd x 
-\int_0^1 c\frac{\bar v}{\bar u}\left(u-\bar u\right)\partial_x(c \bar u)\dd x\\
&=\int c \frac{(u-\bar u)^2}{2\bar u^2} \left(\bar u \partial_x(c\bar v) - \bar v \partial_x(c\bar u)\right)\dd x  
   -\int_0^1 c\,\frac{\bar v}{ \bar u}\left( u-\bar u\right)\partial_x(c \bar u)\dd x
\end{align*}
will be used, where the derivatives on the right hand side can be eliminated by using the stationary equations. 
The integration by parts leading to the third line does not produce any boundary terms since $\bar v(u-\bar u)^2/\bar u$ is
periodic for (PBC) and the continuous extension vanishes at $x=0,1$ for (DBC). The analogous computation for the second part
of $\mathcal{H}[u,v]$ leads to 
$$
  \frac{\dd}{\dd t}\mathcal{H}[u,v]
    = -\frac{\alpha}{2}\int_0^1 \frac{c J(u,v,\bar u,\bar v)}{(1+u+v)(1+\bar u+\bar v)}\dd x\,,
$$
with 
\begin{eqnarray}
\label{eqn_le:J}
J(u,v,\bar u,\bar v)&:=&\left(\frac{u-\bar u}{\bar u}\right)^2 \left[(1+u+v)\left(\bar u^2+\bar v^2\right)+2 \bar v^2 \bar u\right]\\
&&+\left(\frac{v-\bar v}{\bar v}\right)^2 \left[(1+u+v)\left(\bar u^2+\bar v^2\right)+2 \bar u^2 \bar v\right]\nonumber\\
&&-2\frac{u-\bar u}{\bar u}\,\frac{v-\bar v}{\bar v} \left(\bar u^2+\bar v^2+\bar u^2\bar v+ \bar u\bar v^2\right) \,.\nonumber
\end{eqnarray}
Nonnegativity of $J$ is not obvious, but actually even a coercivity property can be shown:

\begin{mythm}\label{thm_le:convergence}
Let the assumptions of Proposition \ref{prop_le:positivity} hold, let $\alpha>\alpha_0$ with the bifurcation value $\alpha_0$, let
a stationary solution $(\bar u,\bar v)$ of \eqref{eqn_le:main} exist, which satisfies either \eqref{eqn_le:PBC-ass} or \eqref{eqn_le:DBC-ass},
and let $\mathcal{H}[u,v]$ be defined by \eqref{eqn_le:lya}. Then there exists a constant $\gamma>0$ such that
$$
  \mathcal{H}[u,v](t) \le e^{-\gamma(t-T)}\mathcal{H}[u,v](T) \,,\qquad\mbox{for } t\ge T \,,
$$
with $T$ from Lemma \ref{lem_le:positivity}.
\end{mythm}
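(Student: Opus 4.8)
The plan is to turn the identity derived above,
\[
  \frac{\dd}{\dd t}\mathcal{H}[u,v] = -\frac{\alpha}{2}\int_0^1 \frac{c\,J(u,v,\bar u,\bar v)}{(1+u+v)(1+\bar u+\bar v)}\,\dd x \,,
\]
into a differential inequality $\frac{\dd}{\dd t}\mathcal{H}[u,v]\le -\gamma\,\mathcal{H}[u,v]$ for $t\ge T$, and then to integrate. Writing $\xi:=(u-\bar u)/\bar u$ and $\eta:=(v-\bar v)/\bar v$, the key estimate is the pointwise coercivity bound
\[
  J(u,v,\bar u,\bar v)\ \ge\ c_0\,(\xi^2+\eta^2)
\]
with a constant $c_0>0$ depending only on the a priori bounds. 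Granting this, one uses $c\ge\underline c$ together with the uniform upper bounds for $u,v$ from Proposition \ref{prop_le:IVP} and for $\bar u,\bar v$ from \eqref{eqn_le:PBC-ass}/\eqref{eqn_le:DBC-ass} to bound $(1+u+v)(1+\bar u+\bar v)$ above by a constant $K$, and the identities $\frac{\bar v}{\bar u}(u-\bar u)^2=\bar u\bar v\,\xi^2\le M^2\xi^2$, $\frac{\bar u}{\bar v}(v-\bar v)^2=\bar u\bar v\,\eta^2\le M^2\eta^2$; this gives $\frac{\dd}{\dd t}\mathcal{H}[u,v]\le -\frac{\alpha\underline c\,c_0}{K\overline c\,M^2}\,\mathcal{H}[u,v]$, and Gronwall's lemma yields the theorem.

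The substance, and the main obstacle, is the coercivity of $J$. A first step is the algebraic identity (valid for all $u,v\ge0$, $\bar u,\bar v>0$, obtained by reorganising \eqref{eqn_le:J})
\[
  J = (\bar u^2+\bar v^2)(u+v)(\xi^2+\eta^2) + \bigl(\bar u^2+\bar v^2+\bar u\bar v(\bar u+\bar v)\bigr)(\xi-\eta)^2 + \bar u\bar v(\bar v-\bar u)(\xi^2-\eta^2)\,.
\]
From it, nonnegativity of $J$ on the whole range $\xi,\eta>-1$ follows by distinguishing the sign of the single indefinite term $\bar u\bar v(\bar v-\bar u)(\xi^2-\eta^2)$: when it is $\ge0$ all three terms are nonnegative; when it is $<0$ one absorbs it into the first two terms, the only delicate subregion being a neighbourhood of $(\xi,\eta)=(-1,-1)$, i.e.\ $(u,v)=(0,0)$, where $J$ vanishes but a first-order expansion gives $J=2\bar u^2(\bar u+\bar v)(1+\xi)+2\bar v^2(\bar u+\bar v)(1+\eta)+\mathcal{O}(|(1+\xi,1+\eta)|^2)>0$. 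Hence $J\ge0$, with zeros exactly at $(\xi,\eta)=(0,0)$ and at $(\xi,\eta)=(-1,-1)$.

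To upgrade nonnegativity to the coercivity bound, the strengthened positivity result of Lemma \ref{lem_le:positivity} is indispensable. For (PBC) it provides $u,v\ge m>0$, which together with the upper bounds confines $(\xi,\eta)$ to a compact subset $\mathcal{K}\subset(-1,\infty)^2$ bounded away from $(-1,-1)$; for (DBC) one checks that $\xi,\eta$ remain in such a compact set uniformly in $x\in[0,1]$ as well, since near the boundary $u$ and $\bar u$ (resp.\ $v$ and $\bar v$) vanish at comparable linear rates by Lemma \ref{lem_le:positivity} and \eqref{eqn_le:DBC-ass}, so the quotients stay bounded and bounded away from $-1$ (the weight $\bar u\bar v$ in $\mathcal{H}$ does vanish at the endpoints, but only like $x$, resp.\ $1-x$, so $\mathcal{H}$ and the dissipation integral stay finite). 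On $\mathcal{K}$ the only zero of $J$ is $(0,0)$, where the Hessian of $(\xi,\eta)\mapsto J$ equals $2Q$ with $Q$ the quadratic form of diagonal entries $(1+\bar u+\bar v)(\bar u^2+\bar v^2)+2\bar u\bar v^2$ and $(1+\bar u+\bar v)(\bar u^2+\bar v^2)+2\bar u^2\bar v$ and off-diagonal entry $-(\bar u^2+\bar v^2+\bar u\bar v(\bar u+\bar v))$; a short computation gives $\det Q\ge\frac{3}{16}(\bar u+\bar v)^6$, which is bounded below since $\bar u+\bar v$ is bounded away from zero on $[0,1]$, so $Q$ is uniformly positive definite. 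A routine compactness argument — $J$ is continuous and strictly positive on the compact set $\mathcal{K}\setminus\{0\}$, and near $0$ the quadratic part $Q$ dominates the cubic remainder of $J$ — then produces $c_0>0$ with $J\ge c_0(\xi^2+\eta^2)$ on $\mathcal{K}$, which, combined with the first paragraph, completes the proof.

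The principal difficulty is thus the coercivity of $J$: the algebraic nonnegativity is a somewhat intricate polynomial estimate, and it must be coupled correctly with the a priori bounds — crucially, $J$ degenerates not only at the steady state but also at $(u,v)=(0,0)$, so the \emph{uniform-in-time} lower bound of Lemma \ref{lem_le:positivity}, not merely positivity, is genuinely needed.
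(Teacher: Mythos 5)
Your overall architecture is sound and genuinely different from the paper's: you reduce everything to a pointwise coercivity bound $J\ge c_0(\xi^2+\eta^2)$, obtained from an algebraic decomposition of $J$ plus a compactness/Hessian argument, whereas the paper splits $(\xi,\eta)$-space into the three regions $\xi\eta\le0$, $0<\xi/\eta\le1$, $0<\eta/\xi<1$ and completes squares explicitly in each, arriving directly at $J\ge\mathrm{const}\cdot\bar u\bar v(\xi^2+\eta^2)$ with explicit constants (in particular $\kappa=\inf(\bar u^2+\bar v^2)v/(\bar u\bar v)$, which is where Lemma \ref{lem_le:positivity} enters). Your identity for $J$ is correct (the coefficients check out), your diagnosis of the two degeneracies at $(0,0)$ and $(-1,-1)$ and of the indispensable role of the uniform-in-time lower bounds is exactly right, your treatment of the (DBC) endpoints is careful, and the final Gronwall step works.

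The gap sits in the one step that carries the mathematical content: the strict positivity of $J$ on the compact set $\mathcal{K}$ away from $(\xi,\eta)=(0,0)$, which your compactness argument takes as input. You propose to get it by absorbing the indefinite term $\bar u\bar v(\bar v-\bar u)(\xi-\eta)(\xi+\eta)$ into the first two terms of your identity ``except near $(-1,-1)$''. But a Young-type absorption requires, roughly, $s(u+v)\bigl(s+\bar u\bar v(\bar u+\bar v)\bigr)\ge\bar u^2\bar v^2(\bar v-\bar u)^2$ with $s=\bar u^2+\bar v^2$, i.e.\ $u+v$ bounded below by a quantity that grows with $\bar u,\bar v$ --- not merely by the fixed positive constant that Lemma \ref{lem_le:positivity} supplies. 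Consequently the region where the absorption fails is not a fixed small neighbourhood of $(-1,-1)$ in which your first-order expansion takes over: for $\bar u\ne\bar v$ of moderate size the two regimes need not cover $\mathcal{K}$ (for instance $\bar u=1$, $\bar v=10$, $\xi=-0.8$, $\eta=-0.99$ gives $J\approx 26>0$, but neither your absorption nor your linearisation certifies it). The claim itself is true --- the paper's three-region computation proves it, yielding $J\ge s(\xi^2+\eta^2)$ when $\xi\eta\le0$ and $J\ge\frac{s}{\bar u\bar v}\bigl(v\tilde v^2+(\tilde u-\tilde v)^2\bigr)$ when $0<\xi/\eta\le1$ --- so your strategy is salvageable by substituting a complete proof of $J>0$ off the two degenerate points for the sketch; as written, though, the compactness argument rests on an unproved premise, and that premise is precisely the heart of the theorem.
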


\begin{proof}
The representation \eqref{eqn_le:J} of $J$ suggests the notation
$$
  \hat u := \frac{u-\bar u}{\bar u} \,,\qquad \hat v := \frac{v-\bar v}{\bar v} \,,
$$
with $(\hat u,\hat v)\in [-1,\infty)^2$. This domain can be split into three subdomains, defined by $\hat u\hat v \le 0$, 
$0 < \hat u/\hat v \le 1$, and, respectively, $0 < \hat v/\hat u <1$.
 
For $\hat u\hat v \le 0$, $J$ is obviously nonnegative and satisfies
\begin{eqnarray*}
  J &\ge& \left( \bar u^2 + \bar v^2\right)\left( \hat u^2 + \hat v^2\right) 
    = \frac{\bar u^2 + \bar v^2}{\bar u \bar v}\left( \frac{\bar v}{\bar u}(u-\bar u)^2 + \frac{\bar u}{\bar v}(v-\bar v)^2\right) \\
    &\ge& 2\left( \frac{\bar v}{\bar u}(u-\bar u)^2 + \frac{\bar u}{\bar v}(v-\bar v)^2\right) \,.
\end{eqnarray*}
Now we consider the second case $0 < \hat u/\hat v \le 1$. The result for the third case then follows by symmetry.
With $\lambda = \hat u/\hat v$, we introduce another change of variables 
$(\hat u,\hat v) \to (\lambda,\hat v)\in (0,1]\times[-1,\infty)$, giving
\begin{eqnarray*}
 \frac{J}{\hat v^2} &=& \lambda^2 \left[ (1+\bar u+\bar v + (\bar u\lambda +\bar v)\hat v)\left(\bar u^2 + \bar v^2\right) 
    + 2\bar u \bar v^2\right] \\
  && + (1+\bar u+\bar v + (\bar u\lambda +\bar v)\hat v)\left(\bar u^2 + \bar v^2\right) 
    + 2\bar u^2 \bar v \\
  && - 2\lambda\left(\bar u^2+\bar v^2+\bar u^2\bar v+ \bar u\bar v^2\right) \\
  &=& \left(\bar u^2 + \bar v^2\right)(\bar u\lambda +\bar v)\left(\lambda^2 + 1 \right)(\hat v + 1) \\
  && + (1-\lambda)\left[ (1-\lambda)\left(\bar u^2 + \bar v^2\right) + \bar u^3(\lambda^2+1) 
    + (1-\lambda)^2\bar u \bar v^2 + 2\bar u^2 \bar v\right] \\
  &\ge& \left(\bar u^2 + \bar v^2\right) \left( \bar v(\hat v + 1) + (1-\lambda)^2\right) \,.
\end{eqnarray*}
With one more set of variables, $\tilde u = \hat u\sqrt{\bar u\bar v} = \sqrt{\bar v/\bar u}(u-\bar u)$,
$\tilde v = \hat v\sqrt{\bar u\bar v} = \sqrt{\bar u/\bar v}(v-\bar v)$, the above inequality is equivalent to
$$
  J\ge \frac{\bar u^2 + \bar v^2}{\bar u \bar v} \left( v \tilde v^2 + (\tilde u - \tilde v)^2\right)
   \ge 2(\tilde u - \tilde v)^2 + \kappa\tilde v^2 \,,\qquad\mbox{with } 
   \kappa = \inf_{[0,1]} \frac{\left(\bar u^2 + \bar v^2\right)v}{\bar u\bar v} \,.
$$
The results of Lemma \ref{lem_le:positivity} and the assumption \eqref{eqn_le:PBC-ass} or \eqref{eqn_le:DBC-ass} imply that $\bar u^2 + \bar v^2$
and $v/\bar v$ are bounded from below and $\bar u$ is bounded from above, with the consequence $\kappa>0$, and therefore
$$
  J\ge \frac{2\kappa}{4+\kappa} \left(\tilde u^2 + \tilde v^2\right) 
  = \frac{2\kappa}{4+\kappa}\left( \frac{\bar v}{\bar u}(u-\bar u)^2 + \frac{\bar u}{\bar v}(v-\bar v)^2\right) \,.
$$
With the common upper bound $M$ for $u,v,\bar u$, and $\bar v$, the proof is completed with 
$$
  \gamma = \frac{2\alpha\kappa}{(1+2M)^2(4+\kappa)} \,.
$$
\end{proof}

\bibliographystyle{plain}
\bibliography{bifpaper_bibliography}

\begin{thebibliography}{1}

\bibitem{Crandall1971}
M.G. Crandall and P.H. Rabinowitz.
\newblock {Bifurcation from simple eigenvalues}.
\newblock {\em J. Functional Anal.}, 8(2):321--340, 1971.

\bibitem{Grimm2003}
H.P. Grimm, A.B. Verkhovsky, A.~Mogilner, and J.-J. Meister.
\newblock {Analysis of actin dynamics at the leading edge of crawling cells:
  implications for the shape of keratocyte lamellipodia.}
\newblock {\em Eur. Biophys. J.}, 32(6):563--577, 2003.

\bibitem{MOSS}
A.~Manhart, D.~Oelz, C.~Schmeiser, and N.~Sfakianakis.
\newblock An extended filament based lamellipodium model produces various
  moving cell shapes in the presence of chemotactical signals.
\newblock {\em J. Theor. Biol.}, 382:244--258, 2015.

\bibitem{Schmeiser2010}
D.~Oelz and C.~Schmeiser.
\newblock How do cells move? {M}athematical modeling of cytoskeleton dynamics
  and cell migration.
\newblock In {\em Cell mechanics: from single scale-based models to multiscale
  modeling}. Chapman and Hall, 2010.

\bibitem{Small1994}
J.V. Small.
\newblock Lamellipodia architecture: actin filament turnover and the lateral
  flow of actin filaments during motility.
\newblock {\em Semin. Cell Biol.}, 5:157--163, 1994.

\bibitem{Small2002}
J.V. Small, T.~Stradal, E.~Vignal, and K.~Rottner.
\newblock The lamellipodium: where motility begins.
\newblock {\em Trends in Cell Biol.}, 12:112--120, 2002.

\bibitem{Svitkina1999}
T.M. Svitkina and G.G. Borisy.
\newblock Arp2/3 complex and actin depolymerizing factor/cofilin in dendritic
  organization and treadmilling of actin filament array in lamellipodia.
\newblock {\em J. Cell Biol.}, 145:1009--1026, 1999.

\bibitem{Vinzenz}
M.~Vinzenz, M.~Nemethova, F.~Schur, J.~Mueller, A.~Narita, E.~Urban,
  C.~Winkler, C.~Schmeiser, S.~Koestler, K.~Rottner, G.P. Resch, Y.~Maeda, and
  J.V. Small.
\newblock Actin branching in the initiation and maintenance of lamellipodia.
\newblock {\em J. Cell Sci.}, 125:2775--2785, 2012.

\end{thebibliography}

\end{document}